\markboth{   \today}{    \today}
\newcommand{\nc}{\newcommand}
\numberwithin{equation}{section}
\newtheorem{theorem}{Theorem}[section]
\newtheorem{prop}[theorem]{Proposition}
\newtheorem{importnota}[theorem]{Important Notation}
\newtheorem{prblm}[theorem]{Problem}
\newtheorem{notation}[theorem]{Notation}
\newtheorem{caution}[theorem]{Caution}
\newtheorem{remark}[theorem]{Remark}
\newtheorem{lemma}[theorem]{Lemma}
\newtheorem{construction}[theorem]{Construction}
\newtheorem{corollary}[theorem]{Corollary}
\newtheorem{example}[theorem]{Example}
\newtheorem{conclusion}[theorem]{Conclusion}
\newtheorem{triviality}[theorem]{Triviality}
\newtheorem{proto}[theorem]{Prototype Quasifibration}
\newtheorem{cauex}[theorem]{Cautionary Example}
\newtheorem{propositiondef}[theorem]{Proposition-Definition}
\newtheorem{subth}{Nuisance}[theorem]
\newtheorem{ssubth}{ }[subth]
\newtheorem{conjecture}[theorem]{Conjecture}
\newtheorem{sidest}[theorem]{Side Story}
\newtheorem{miniexample}[theorem]{Example}
\theoremstyle{definition}
\newtheorem{defin}[theorem]{Definition}
\nc\tri[1]{\begin{triviality}}
\nc\side[1]{\begin{sidest}}
\nc\conj[1]{\begin{conjecture}}
\nc\prodef[1]{\begin{propositiondef}}
\nc\prt[1]{\begin{proto}}
\nc\lem[1]{\begin{lemma}}
\nc\sblm[1]{\begin{sublemma}}
\nc\pro[1]{\begin{prop}}
\nc\thm[1]{\begin{theorem}}
\nc\cor[1]{\begin{corollary}}
\nc\dfn[1]{\begin{defin}}
\nc\sthm[1]{\begin{subth}}
\nc\exm[1]{\begin{example}}
\nc\miniexm[1]{\begin{miniexample}}
\nc\plm[1]{\begin{prblm}}
\nc\rmk[1]{\begin{remark}}
\nc\subrmk[1]{\begin{subremark}}
\nc\ntn[1]{\begin{notation}}
\nc\cau[1]{\begin{caution}}
\nc\imn[1]{\begin{importnota}}
\nc\cax[1]{\begin{cauex}}
\nc\con[1]{\begin{construction}}
\nc\ssthm[1]{\begin{ssubth}}
\nc\cnc[1]{\begin{conclusion}}
\nc\elem{\end{lemma}}
\nc\esblm{\end{sublemma}}
\nc\eside{\end{sidest}}
\nc\econj{\end{conjecture}}
\nc\eprodef{\end{propositiondef}}
\nc\eprt{\end{proto}}
\nc\ethm{\end{theorem}}
\nc\ecor{\end{corollary}}
\nc\edfn{\end{defin}}
\nc\esthm{\end{subth}}
\nc\epro{\end{prop}}
\nc\etri{\end{triviality}}
\nc\eexm{\end{example}}
\nc\eminiexm{\end{miniexample}}
\nc\ermk{\end{remark}}
\nc\subermk{\end{subremark}}
\nc\eplm{\end{prblm}}
\nc\ecau{\end{caution}}
\nc\ecax{\end{cauex}}
\nc\eimn{\end{importnota}}
\nc\entn{\end{notation}}
\nc\econ{\end{construction}}
\nc\ecnc{\end{conclusion}}
\nc\essthm{\end{ssubth}}
\numberwithin{equation}{section}
\newcommand{\ZZ}{{\bf Z}}
\newcommand{\QQ}{{\bf Q}}
\newcommand{\CC}{{\bf C}}
\newcommand{\PP}{{\bf P}}
\newcommand{\lra}{\longrightarrow}
\newcommand{\cD}{{\mathcal D}}
\newcommand{\cH}{{\mathcal H}}
\newcommand{\C}{\mathbb{C}}
\newcommand{\Q}{\mathbb{Q}}
\newcommand{\Z}{\mathbb{Z}}
\newcommand{\ds}{\displaystyle}
\newcommand{\tcD}{\tilde{\mathcal D}}
\newcommand{\bes}{\begin{equation*}}
\newcommand{\ees}{\end{equation*}}
\title[On intermediate Jacobians of cubic threefolds]
{On intermediate Jacobians of cubic threefolds admitting an automorphism of order five}
\author{Bert van Geemen and Takuya Yamauchi}
\keywords{cubic threefolds, intermediate Jacobian, elliptic curves, and abelian surfaces with real multiplication}
\dedicatory{To  Eduard Looijenga on his 69th birthday}
\thanks{The second author
is partially supported by JSPS Grant-in-Aid for Scientific Research (C) No.15K04787.}
\subjclass[2010]{}
\address{Bert van Geemen \\
Dipartimento di Matematica, Universit`a di Milano\\
Via Saldini 50, I-20133 Milano, Italy }
\email{lambertus.vangeemen@unimi.it}
\address{Takuya Yamauchi \\
Department of mathematics, Faculty of Education\\
Kagoshima University\\
Korimoto 1-20-6 Kagoshima 890-0065, JAPAN}
\email{yamauchi@edu.kagoshima-u.ac.jp}
\begin{document}
\begin{abstract}Let $k$ be a field of characteristic zero containing a primitive fifth root of unity.   
Let $X/k$ be a smooth cubic threefold with an automorphism of order five,
then we observe that over a finite extension of the field
actually the dihedral group $D_5$ is a subgroup of ${\rm Aut}(X)$. 
We find that the intermediate Jacobian $J(X)$ of $X$  
is isogenous to the product of an elliptic curve $E$ and the self-product of an abelian surface $B$ with real multiplication by $\QQ(\sqrt{5})$.
We give explicit models of some algebraic curves related to the  construction of 
$J(X)$ as a Prym variety. 
This includes a two parameter family of curves of genus 2 whose 
Jacobians are isogenous to the abelian surfaces mentioned as above. 
\end{abstract}
\maketitle

\section*{Introduction}\label{intro}
Let $k$ be a field of characteristic zero that contains  
a primitive fifth root  of unity $\zeta$.   
Let $X$ be a smooth cubic threefold  in $\mathbb{P}^4$ 
with an automorphism of order five over $k$.  
There is a two-dimensional family of such threefolds.
From the explicit equations provided by the paper \cite{victor} we deduce that,
over a finite extension of $k$, the dihedral group $D_5$ acts on $X$. 
It is then not hard to establish that 
the intermediate Jacobian $J(X)$ of $X$, a principally polarized abelian fivefold,
is isogenous to the product of an elliptic curve $E$ and the self-product of an
abelian surface $B$ with $\QQ(\sqrt{5})\subset {\rm End}_{\overline{k}}(B)\otimes_\Z\Q$. 

To study the abelian varieties $E$ and $B$ in more detail,
we use that $J(X)$ is isomorphic to the Prym variety 
of an etale double cover of curves. 
In fact, for a general line $l$ on $X$, the curve $\cH_l$ which parametrizes the lines 
on $X$ meeting $l$ has a fixed point free involution $\iota_l$. 
We found a line $l$ such that the automorphisms of $X$ in $D_5$ 
induce automorphisms on the curve $\cH_l$, commuting with the covering involution $\iota_l$.
In this way we get an action of the group $D_5$ on the Prym variety 
which agrees with the action of $D_5$ on $J(X)$. 
Given $X$, we can then identify, up to isogeny, the elliptic curve $E$
with an explicit quotient of $\cH_l$. The abelian surface $B$ is isogenous to 
the Prym variety of certain explicit double cover of a genus two curve by a genus four curve 
related to $\cH_l$.
Finally we identify the isogeny class of $B$ with the isogeny class of the Jacobian of a certain 
explicit genus two curve. This Jacobian has $\Z[\frac{1+\sqrt{5}}{2}]$ in its
endomorphism ring and the explicit equation for the 
curve might be of independent interest.

Using the results from J.D.\ Achter \cite{Achter}, one can obtain similar 
results over $\ZZ[\zeta,1/10]$. 


\section{A standard model for $X$ and decomposing $J(X)$}

\subsection{Outline}
We start by finding a nice model for the cubic threefolds
with an automorphism of order five given in \cite{victor}. 
We then observe that they also admit an involution and that the dihedral group $D_5$ acts on 
such cubic threefolds. 
We use the $D_5$-action to decompose the intermediate Jacobian in Proposition
\ref{finedecij}.

\subsection{A standard model}\label{stmo}
Let $k$ be a field, with $char(k)=0$,  which contains a primitive fifth root of unity $\zeta$. 
From \cite[Thm 3.5]{victor} we know that a smooth cubic threefold $X\subset \mathbb{P}^4$ over $k$ 
that admits an automorphism $\alpha_X$
over $k$ of order five has an equation $F=F_{\underline{a}}$: 
$$
F_{\underline{a}}=a_1x_0^3 + a_2x_0x_1x_4 + a_3x_0x_2x_3 + a_4x_1^2x_3 + a_5x_1x_2^2 + a_6x_2x_4^2+a_7x_3^2x_4\,=\,0
$$
where $\underline{a}=(a_1,\ldots,a_7)\in k^7$  
and the automorphism acts as 
$$
\alpha_X:\;(x_0:x_1:x_2:x_3:x_4)\,\longmapsto\, 
(x_0:\zeta x_1:\zeta^2x_2:\zeta^3x_3:\zeta^4x_4)~.
$$
We first study the condition on $F_{\underline{a}}$ 
to give a smooth cubic threefold in terms of the seven 
parameters.  
For $\underline{a}\in k^7$, we define a homogeneous polynomial by 
\begin{equation}
\begin{array}{rl}
\Delta(\underline{a}):=& a^4_2 a^5_3 a_4 a_6 + 8 a_1 a^2_2 a^4_3 a^2_4 a^2_6 + 16 a^2_1 a^3_3 a^3_4 a^3_6 + 
 a^5_2 a^4_3 a_5 a_7 + 15 a_1 a^3_2 a^3_3 a_4 a_5 a_6 a_7+\\
&  
 12 a^2_1 a_2 a^2_3 a^2_4 a_5 a^2_6 a_7 + 8 a_1 a^4_2 a^2_3 a^2_5 a^2_7 +
12 a^2_1 a^2_2 a_3 a_4 a^2_5 a_6 a^2_7 + 27 a^3_1 a^2_4 a^2_5 a^2_6 a^2_7+ \\ 
&16 a^2_1 a^3_2 a^3_5 a^3_7.
\end{array}
\end{equation}
Put 
\begin{equation}
D(\underline{a})=a_1 a_4 a_5 a_6 a_7\Delta(\underline{a}). 
\end{equation}
Put $$
P_1=(1:0:0:0:0),\quad P_4=(0:1:0:0:0),\quad P_5=(0:0:1:0:0)~,
$$
$$
P_6=(0:0:0:0:1),\quad P_7=(0:0:0:1:0)~.
$$
Then we have 

\begin{lemma}\label{discX}
The polynomial $F_{\underline{a}}$ defines a 
smooth cubic threefold if and only if $D(\underline{a})\not= 0$. 
\end{lemma}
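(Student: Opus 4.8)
The plan is to check smoothness of the projective hypersurface $\{F_{\underline{a}}=0\}\subset\mathbb{P}^4$ via the Jacobian criterion: $X$ is singular if and only if the five partial derivatives $\partial F/\partial x_i$ have a common zero in $\mathbb{P}^4$. So I would compute
\[
\begin{array}{l}
\partial F/\partial x_0 = 3a_1x_0^2 + a_2x_1x_4 + a_3x_2x_3,\quad
\partial F/\partial x_1 = a_2x_0x_4 + 2a_4x_1x_3 + a_5x_2^2,\\[2pt]
\partial F/\partial x_2 = a_3x_0x_3 + 2a_5x_1x_2 + a_6x_4^2,\quad
\partial F/\partial x_3 = a_3x_0x_2 + a_4x_1^2 + 2a_7x_3x_4,\\[2pt]
\partial F/\partial x_4 = a_2x_0x_1 + 2a_6x_2x_4 + a_7x_3^2,
\end{array}
\]
and determine exactly when these cut out the empty set in $\mathbb{P}^4$. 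In principle the resultant/elimination ideal of these five forms in five variables is a single polynomial in $\underline{a}$ (up to radical), and the claim is that this polynomial is $D(\underline{a})=a_1a_4a_5a_6a_7\,\Delta(\underline{a})$ up to a unit. Rather than computing a $5\times 5$ multivariate resultant by hand, I would exploit the $\mathbb Z/5$-grading: $x_i$ has weight $i\bmod 5$, each monomial of $F$ has weight $0$, so $\partial F/\partial x_i$ is homogeneous of weight $-i\bmod 5$. This graded structure severely restricts which coordinates of a singular point can be simultaneously nonzero.

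The key steps, in order, are as follows. First, the ``only if'' direction (necessity of $D(\underline{a})\neq 0$): I would show that if any one of $a_1,a_4,a_5,a_6,a_7$ vanishes then $X$ is singular, by exhibiting an explicit singular point — indeed $P_1,P_4,P_5,P_6,P_7$ are the natural candidates. For instance, at $P_1=(1:0:0:0:0)$ all five partials vanish precisely when $a_1=0$ (since $\partial F/\partial x_0=3a_1x_0^2$ there and the others vanish identically at $P_1$); similarly $P_4$ is singular iff $a_4=0$, and so on — each $P_j$ is a coordinate point lying on $X$ exactly because the only cubic monomial involving the cube of that variable is absent, and one checks the Jacobian vanishes there iff the corresponding $a_j=0$. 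This handles the factor $a_1a_4a_5a_6a_7$. Then, assuming all of $a_1,\dots,a_7$ are nonzero, I would rescale the coordinates $x_i\mapsto \lambda_i x_i$ to normalize as many $a_i$ as possible to $1$ (the torus $(\mathbb{G}_m)^5$ acting on $\mathbb{P}^4$ has a $4$-dimensional effective image, so one expects to kill four of the seven parameters, leaving a genuinely $3$-parameter — or after the overall scaling of $F$, effectively $2$-parameter — family, consistent with the ``two-dimensional family'' mentioned in the introduction). In these normalized coordinates the system of partials becomes much smaller, and I would eliminate variables one at a time, tracking the $\mathbb{Z}/5$-weights to see that a singular point must have all coordinates nonzero (any point with some $x_i=0$ is forced to lie in one of the special loci already handled, or leads to a contradiction with smoothness of the generic member). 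The surviving single polynomial relation among the $a_i$ that characterizes the existence of such a ``fully nonzero'' singular point is then identified with $\Delta(\underline{a})$, restoring the $a_i$ by homogeneity/weight bookkeeping.

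The main obstacle I anticipate is the explicit elimination computation producing $\Delta(\underline{a})$ with its precise ten monomials and integer coefficients $1,8,16,1,15,12,8,12,27,16$ — this is the kind of multivariate resultant that is painful by hand and essentially forces a computer-algebra verification (Gröbner basis elimination, or a structured resultant exploiting the weighted-homogeneity). A secondary subtlety is making the case analysis genuinely exhaustive: one must be careful that when some but not all coordinates of a putative singular point vanish, the combination of the grading constraints and the surviving equations indeed either reproduces a vanishing of one of $a_1,a_4,a_5,a_6,a_7$, or is impossible. I would organize this as a finite check over the $2^5-1$ possible support patterns of $(x_0:\dots:x_4)$, pruned drastically by the weight argument (patterns whose complementary monomials in $F$ cannot vanish are immediately excluded, and by symmetry of the $\mathbb{Z}/5$-action many patterns are equivalent). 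Once both directions are in place, $X$ is smooth $\iff a_1a_4a_5a_6a_7\neq 0$ and $\Delta(\underline{a})\neq 0$ $\iff D(\underline{a})\neq 0$, which is the assertion of Lemma \ref{discX}.
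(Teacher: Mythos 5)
Your proposal is correct in outline and is, at bottom, the paper's own approach: the Jacobian criterion, the coordinate points $P_1,P_4,P_5,P_6,P_7$ to show that vanishing of any of $a_1,a_4,a_5,a_6,a_7$ yields a singular point, and a Gr\"obner/elimination computation to account for the factor $\Delta(\underline{a})$. The difference lies in how that computation is organized. The paper keeps all seven coefficients symbolic and extracts from a single Gr\"obner basis the membership certificate $a_{n_i}\Delta(\underline{a})\,x_i^6\in I$ for $i=0,\dots,4$, where $I$ is the Jacobian ideal and $(n_0,\dots,n_4)=(1,4,5,7,6)$; when $D(\underline{a})\neq 0$ this puts every $x_i$ in $\sqrt{I}$, hence $V(I)=\emptyset$, with no normalization and no case analysis over which coordinates of a putative singular point vanish. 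For the converse it checks that in each chart $x_i\neq 0$ the constants contained in the dehomogenized Jacobian ideal are exactly the multiples of $a_{n_i}\Delta(\underline{a})$, so that $\Delta(\underline{a})=0$ (with $a_j\neq 0$ for $j\in\{1,4,5,6,7\}$) forces a singular point. Your route — normalize by the torus action, eliminate in the full-support case, then sweep the remaining support patterns using the weights — can be made to work, but it carries two loose ends that the paper's packaging sidesteps: (i) the normalization assumes $a_2a_3\neq 0$, whereas smooth members with $a_2=0$ or $a_3=0$ do occur (cf.\ Remark~\ref{specialcase1}), so those strata need the same elimination run separately; (ii) your dichotomy for partial-support singular points (``forces some $a_j=0$ or is impossible'') must admit the third outcome ``or $\Delta(\underline{a})=0$'', and for the implication ``$\Delta(\underline{a})=0\Rightarrow X$ singular'' you must know that the singular locus maps \emph{onto} $\{\Delta=0\}$ within the locus where the relevant $a_j$ are nonzero — which is precisely what the paper's per-chart elimination statement delivers. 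Both points are settled by the same computer-algebra computation you already invoke, so I would call the plan viable rather than gapped, just less economically arranged than the paper's certificate.
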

\begin{proof}
Assume $a_i=0$ for some $i\in \{1,4,5,6,7\}$, then it is easy to see that $P_i\in X$ gives a 
singular  point. So we may assume that $a_i\neq 0$ for any $i\in \{1,4,5,6,7\}$.  
Let $I=J_{F_{\underline{a}}}$ be the homogeneous ideal in $k[x_0,\ldots,x_4]$ 
generated by the partial derivatives 
${\partial F_{\underline{a}}}/{\partial x_j}$ for  $j=0,\ldots,4$. 
By using a Groebner Basis, one can show that  
$$
a_{n_i}\Delta(\underline{a})x^6_i\in I,\qquad i=0,\ldots,4,
$$
where $(n_0,n_1,n_2,n_3,n_4)=(1,4,5,7,6)$.
So if $D(\underline{a})\not= 0$, then the radical of $I$
in $\overline{k}[x_0,\ldots,x_4]$ contains 
$(x_0,\ldots,x_4)$, hence $V(I)=\emptyset$.
Therefore $F_{\underline{a}}$ defines to a smooth cubic threefold. 

Conversely, assume that $X$ is smooth. Let $U_i$ be the open subset of $\mathbb{P}^4$ defined by 
$x_i\not=0$. Then one can check that 
the ideal corresponding to $V(I)\cap U_i$ contains $a_{n_i}\Delta(\underline{a})$ as the unique constant 
(up to scalar). 
Hence $a_{n_i}\Delta(\underline{a})$ should be non-zero for any $i$. This proves the lemma. 
\end{proof}

\subsection{A standard form} From Lemma \ref{discX} we see that
the coefficients $a_1,a_4,a_5,a_6,a_7$ must all be non-zero for a smooth cubic. Over an algebraically closed field one can make changes of the  coordinates which reveal that
a smooth cubic threefold with an automorphism of order five actually has 
$D_5$ in its automorphism group.

\begin{lemma}\label{lemmaxab}\label{d5x}
Let $X_{\underline{a}}$ be a smooth cubic threefold 
defined by $F_{\underline{a}}$ where $\underline{a}\in k^7$.
Assume that $a_2\not=0$ or $a_3\not=0$. 
Then there is a change of coordinates in $\PP^4_{\bar{k}}$
such that $X_{\underline{a}}$ is isomorphic to 
$$
X_{a,b}:\qquad F_{a,b}\,:=\, xu^2+2yuv+zv^2\;+\;2z^2u+ 2x^2v\;+\;
ay^3 + bxyz~=\,0
$$
for some $a,b\in \bar{k}$. Moreover, $X_{a,b}$ is smooth if and only if
\begin{equation}\label{disc2}
D(a,b):=\frac{1}{64}D(a, b, 2, 2, 1, 2, 1)=a\Delta(a,b)\not=0,
\end{equation}
$$
\Delta(a,b):=512 a^2 + 27 a^3 + 48 a^2 b + 128 a b^2 +  6 a^2 b^2 + 30 a b^3+ 
a^2 b^3 + 8 b^4 + 2 a b^4 + b^5.
$$
The threefold $X_{a,b}$ has the following automorphisms, of order five and two respectively:
$$
\alpha_X(\mathbf{x})\,:=\,(\zeta^2 u :\zeta^3 v:\zeta x:y:\zeta^4 z),\quad
\iota_X(\mathbf{x})\,:=\,(v:u:z:y:x)~,
$$
where $\mathbf{x}=(u:v:x:y:z)$. These automorphisms generate a dihedral subgroup $D_5$ of order $10$ in ${\rm Aut}(X_{a,b})$.

\end{lemma}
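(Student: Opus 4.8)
The plan is to start from the standard model $F_{\underline a}$ of Lemma \ref{discX}, under the hypothesis $a_2\neq 0$ or $a_3\neq 0$, and produce an explicit linear change of coordinates over $\bar k$ bringing it into the claimed two-parameter form $F_{a,b}$. The monomials appearing in $F_{\underline a}$ are $x_0^3,\ x_0x_1x_4,\ x_0x_2x_3,\ x_1^2x_3,\ x_1x_2^2,\ x_2x_4^2,\ x_3^2x_4$; each is an eigenvector for the diagonal $\mu_5$-action with a distinct character, which is what forces this shape. Comparing with the target monomials $xu^2,\ yuv,\ zv^2,\ z^2u,\ x^2v,\ y^3,\ xyz$ of $F_{a,b}$, one sees the dictionary $x_0\leftrightarrow y$ (the $\zeta^0$-eigenvector, cubed), and a matching of the remaining five variables $x_1,x_2,x_3,x_4\leftrightarrow$ four of $u,v,x,z$ according to their $\mu_5$-weights. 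So the first step is: scale the five coordinates $x_0,\dots,x_4$ by suitable elements of $\bar k^\times$ — using $a_1,a_4,a_5,a_6,a_7\neq 0$ (Lemma \ref{discX}) to normalize five of the seven coefficients to the values $(a,b,2,2,1,2,1)$ appearing in $D(a,b)$ — leaving exactly two genuine parameters $a,b$. This is a routine but slightly fiddly linear-algebra bookkeeping: one has five scaling degrees of freedom (projectively, four, but the overall scaling of $F$ gives a fifth) against seven coefficients, consistently leaving $7-5=2$ moduli. One must check the two coefficients one chooses to leave free really are $a$ and $b$ up to the relabeling, and that the case split $a_2\neq 0$ vs.\ $a_3\neq 0$ is handled (possibly via the involution swapping the two roles, or an extra permutation of coordinates).

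The second step is the smoothness criterion: substituting $\underline a=(a,b,2,2,1,2,1)$ into $D(\underline a)=a_1a_4a_5a_6a_7\,\Delta(\underline a)$ gives $D(a,b,2,2,1,2,1)=(a)(2)(1)(2)(1)\,\Delta(a,b,2,2,1,2,1)=4a\,\Delta(a,b,2,2,1,2,1)$, so dividing by $64$ one wants $D(a,b)=\tfrac{1}{64}D(a,b,2,2,1,2,1)=\tfrac{4a}{64}\Delta(\cdots)=\tfrac{a}{16}\Delta(\cdots)$; so to match the stated formula $D(a,b)=a\Delta(a,b)$ one should specify $\Delta(a,b):=\tfrac{1}{16}\Delta(a,b,2,2,1,2,1)$ and then simply verify, by direct substitution into the degree-$12$ polynomial $\Delta(\underline a)$ of \S\ref{stmo}, that this equals $512a^2+27a^3+48a^2b+128ab^2+6a^2b^2+30ab^3+a^2b^3+8b^4+2ab^4+b^5$. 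Smoothness of $X_{a,b}$ iff $D(a,b)\neq 0$ is then immediate from Lemma \ref{discX}, since $X_{a,b}$ is by construction projectively equivalent over $\bar k$ to $X_{\underline a}$ with $\underline a=(a,b,2,2,1,2,1)$.

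The third step is to exhibit the two automorphisms and identify the group. For $\alpha_X(u:v:x:y:z)=(\zeta^2u:\zeta^3v:\zeta x:y:\zeta^4z)$ one checks monomial by monomial that each term of $F_{a,b}$ scales by the same power of $\zeta$: $xu^2\mapsto \zeta^{1+4}=\zeta^5=1$, $yuv\mapsto\zeta^{0+2+3}=1$, $zv^2\mapsto\zeta^{4+6}=\zeta^{10}=1$, $z^2u\mapsto\zeta^{8+2}=1$, $x^2v\mapsto\zeta^{2+3}=1$, $y^3\mapsto 1$, $xyz\mapsto\zeta^{1+0+4}=1$; so $F_{a,b}\circ\alpha_X=F_{a,b}$ and $\alpha_X$ has order $5$. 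For $\iota_X(u:v:x:y:z)=(v:u:z:y:x)$ one checks the term set is permuted: $xu^2\leftrightarrow zv^2$, $z^2u\leftrightarrow x^2v$, while $yuv$, $y^3$, $xyz$ are fixed; hence $F_{a,b}\circ\iota_X=F_{a,b}$ and $\iota_X$ has order $2$. Finally $\iota_X\alpha_X\iota_X$ sends $(u:v:x:y:z)$ first to $(v:u:z:y:x)$, then $\alpha_X$ to $(\zeta^2v:\zeta^3u:\zeta z:y:\zeta^4x)$, then $\iota_X$ back to $(\zeta^3u:\zeta^2v:\zeta^4x:y:\zeta z)$, i.e.\ the diagonal action with weights $(3,2,4,0,1)=-(2,3,1,0,4)\bmod 5$, which is $\alpha_X^{-1}$; so $\langle\alpha_X,\iota_X\rangle\cong D_5$ of order $10$, and it sits inside ${\rm Aut}(X_{a,b})$.

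**The main obstacle** I expect is not conceptual but the Step 1 coordinate bookkeeping: pinning down the precise scalars (in particular fifth and other roots, which is why one passes to $\bar k$) that send the seven $a_i$ to $(a,b,2,2,1,2,1)$ while correctly matching $\mu_5$-weights, and making sure the hypothesis "$a_2\neq 0$ or $a_3\neq 0$" is exactly what is needed (if both vanish the cubic degenerates and cannot reach this form). The verification that the substituted discriminant equals the displayed quintic-in-$a,b$ expression is a finite symbolic computation best done on a computer algebra system, and is the other place where an arithmetic slip could hide.
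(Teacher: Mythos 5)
Your proposal is correct and is essentially the paper's own argument: the paper executes your Step 1 explicitly (a coordinate permutation to reduce to $a_3\neq 0$, then diagonal rescalings over $\bar{k}$ — including a 15th root $\mu$ with $\mu^{15}=1/a_4''$, which is the reason for passing to $\bar{k}$ — ending with $(a,b)=\bigl(32a_1a_5^2a_7^2/(a_3^3a_4a_6),\,8a_2a_5a_7/(a_3a_4a_6)\bigr)$), then deduces the smoothness criterion by substituting into Lemma \ref{discX} exactly as you do, and establishes $D_5$ by the same check $\iota_X\alpha_X\iota_X^{-1}=\alpha_X^{-1}$. One parenthetical slip: if $a_2=a_3=0$ the cubic does not degenerate — it can still be smooth (see Remark \ref{specialcase1}) — that case is merely excluded by hypothesis and treated separately, but this does not affect your proof.
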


\begin{proof}
In case $a_3=0$, we observe that changing coordinates and coefficients as follows: 
$$
(x_0,x_1,x_2,x_3,x_4)\,\longmapsto (x_0,x_4,x_3,x_2,x_1)~,
$$
$$
\underline{a}\,=\,(a_1,a_2,a_3,a_4,a_5,a_6,a_7)\,\longmapsto 
\underline{a}'\,:=\,(a_1,a_3,a_2,a_6,a_4,a_5,a_7)~,
$$
maps $F_{\underline{a}}$ to $F_{\underline{a}'}$. Hence we may assume that
$a_3\neq 0$. Since $F_{\underline{a}}$ defines a smooth cubic, we can introduce new
coordinates as follows:
$$
x_0\,:=\,2y/a_3,\quad x_1\,:=\,x/a_5,\quad   x_4\,:=\,z/a_7~.
$$
Then the equation for $X_{\underline{a}}$ is:
$$
xx_2^2+2yx_2x_3+zx_3^2\;+\;2a'_6z^2x_2+ 2a_4'x^2x_3\;+\;
a_1'y^3 + a_2'xyz\,=\,0
$$
where $(a'_6,a'_4,a'_1,a'_2)=\left(\ds\frac{a_6}{2a^2_7},\frac{a_4}{2a^2_5},\frac{8a_1}{a^3_3},\frac{2a_2}{a_3a_5a_7}\right)$. 
Now observe that  $a_6'\neq 0$ by Lemma \ref{discX}. We substitute 
$x_2:=\lambda u$, $x_3:=\lambda v$ with $\lambda=a_6'$ and we divide by $\lambda^2$
to get, with constants $a_i''$,
$$
xu^2+2yuv+zv^2\;+\;2z^2u+ 2a_4''x^2v\;+\;
a_1''y^3 + a_2''xyz\,=\,0~, 
$$
where $(a''_4,a''_1,a''_2)=\left(\ds\frac{a_4a^2_7}{a^2_5a_6},\frac{32a_1a^4_7}{a^3_3a^2_6},
\frac{8a_2a^3_7}{a_3a_5a^2_6}\right)$.
Observe that $a_4''\neq 0$ by Lemma \ref{discX} again. Let $\mu\in \overline{k}$ so that  
\begin{equation}\label{mu}
\mu^{15}=\frac{1}{a''_4}~.
\end{equation} We substitute  $(x,y,z,u,v)\mapsto (\mu^8 x,\mu^5 y,\mu^2 z,\mu^{-4} u,\mu^{-1} v)$. 
Then we get the equation $F_{a,b}=0$ as in the lemma with the parameters   
\begin{equation}\label{ab}
(a,b)=\left(\frac{32a_1a^2_5a^2_7}{a^3_3a_4a_6},\,\frac{8a_2a_5a_7}{a_3a_4a_6}\right)~. 
\end{equation}
By Lemma \ref{discX}, $X_{a,b}$ is smooth if and only if $D(a,b)\not=0$.  

To see that the automorphisms generate a $D_5$ it suffices to observe that
$\iota_X\alpha_X\iota_X^{-1}=\alpha_X^{-1}$.
\end{proof}

\begin{remark}\label{specialcase1}If $a_2=a_3=0$ and $X_{\underline{a}}$ is smooth,
then, by Lemma \ref{discX}, 
$\Delta(\underline{a})=27 a^3_1 a^2_4 a^2_5 a^2_6 a^2_7\not=0$. 
By the similar argument as above we see that $X_{\underline{a}}$ is isomorphic 
over $\overline{k}$ to 
$$X_0:x^3_0+x^2_1x_3+x_1x^2_2+x_2x^2_4+x^2_3x_4=0.$$ 
\end{remark}

\subsection{The intermediate Jacobian}\label{iJ}
In this subsection we assume $k\subset \C$.  Let $X$ be a smooth cubic threefold 
over $k$. 
By abuse of notation we denote again by $X$ its base change to $\C$.  
The intermediate Jacobian of $X$ is the 
five dimensional (principally polarized) abelian variety (\cite[Definition 12.2]{V1})
$$
J(X)\,:=\,H^3(X,\CC)/(F^2H^3(X,\C)+ H^3(X,\ZZ))\,=\,H^{1,2}(X)/H^3(X,\ZZ)~,
$$
where we used that $H^{0,3}(X)=0$. 

To find an isogeny decomposition of 
the intermediate Jacobian $J(X)$ we first consider the action of $\alpha_X$.

\begin{lemma}\label{decij}
Let $X=X_{\underline{a}}$ be a smooth cubic threefold in Section \ref{stmo} 
with automorphism $\alpha_X$. Then the eigenvalues of  $\alpha_X^*:H^{1,2}(X)\rightarrow H^{1,2}(X)$
are $\zeta^k$, $k=0,1,2,3,4$, each with multiplicity one.
\end{lemma}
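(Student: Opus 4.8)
The plan is to compute the $\alpha_X^*$-action on $H^{1,2}(X)$ directly from the Griffiths residue description of the Hodge structure of a smooth cubic threefold. Let $F = F_{\underline a} \in S := k[x_0,\dots,x_4]$ be the defining cubic and let $R = S/J_F$ be the Jacobian ring, where $J_F$ is the ideal generated by the partials. Griffiths' theory identifies $H^{2,1}(X)_{\mathrm{prim}} \cong R_3$ and $H^{1,2}(X)_{\mathrm{prim}} \cong R_6$ (degrees $3$ and $6$ in the grading of $R$), the isomorphisms being equivariant for any linear automorphism of $\PP^4$ preserving $X$. Since $X$ is a threefold and $H^3$ is entirely primitive, we have $H^{1,2}(X) \cong R_6$ as $\langle \alpha_X\rangle$-modules, and $\dim R_6 = h^{1,2}(X) = 5$. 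So it suffices to diagonalize the induced action of $\alpha_X$ on the five-dimensional space $R_6$ and read off the eigenvalues.

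The key step is therefore a monomial computation in $R_6$. The automorphism acts on coordinates by $x_j \mapsto \zeta^j x_j$, hence on a monomial $x^I = x_0^{i_0}\cdots x_4^{i_4}$ by the character $\zeta^{\,i_1 + 2 i_2 + 3 i_3 + 4 i_4}$, i.e.\ $\zeta^{w(I)}$ with $w(I) := \sum_j j\, i_j \bmod 5$. Grouping the monomials of degree $6$ by the value of $w(I) \bmod 5$ gives a direct-sum decomposition of $S_6$ into five eigenspaces; since $F$ is $\alpha_X$-invariant, $J_F$ is a graded $\alpha_X$-stable ideal, so $R_6$ inherits this decomposition, $R_6 = \bigoplus_{k=0}^4 (R_6)_k$ where $\alpha_X$ acts on $(R_6)_k$ by $\zeta^k$. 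The claim is exactly that each $(R_6)_k$ is one-dimensional. I would verify this by exhibiting, for each residue class $k$, a degree-$6$ monomial of weight $k$ that is nonzero in $R$, together with a count showing the class has dimension at most one; concretely one uses the relation (established in the proof of Lemma~\ref{discX} via a Gröbner basis) that $a_{n_i}\Delta(\underline a)\, x_i^6 \in J_F$, which already shows the $x_i^6$ are redundant, and more generally one can compute a monomial basis of $R$ compatible with the weight grading. Alternatively, and more cleanly, one invokes that $R$ is a complete intersection Artinian Gorenstein ring with socle in degree $\dim = 5 \cdot 2/(\text{...})$—precisely, for a smooth cubic in $\PP^4$ the Jacobian ring $R$ has Hilbert function $1,5,\ \dots$ determined by $(1+t+t^2)^5/(1)$ truncated appropriately; the relevant fact is simply $\dim R_6 = 5$ with an explicit monomial basis, say $\{x_0^2 x_1 x_4,\ x_0^2 x_2 x_3,\ \dots\}$ chosen one from each weight class.

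The main obstacle is ensuring that the five weight classes are each hit exactly once in $R_6$ rather than, say, one class being two-dimensional and another empty. This is where one must actually use the structure of $F_{\underline a}$ (all seven monomials of $F$, not just the invariance) to control which products of the partials lie in low degrees; the cleanest route is to pass to the normal form $X_{a,b}$ of Lemma~\ref{d5x}, where the partials of $F_{a,b}$ are explicit, compute a Gröbner basis of $J_{F_{a,b}}$ over the function field $k(a,b)$, extract the monomial basis of $R_6$, and observe it consists of one monomial per weight class; by the genericity hypothesis $D(a,b)\neq 0$ this holds identically in $(a,b)$, and the general $X_{\underline a}$ with $a_2\neq 0$ or $a_3\neq 0$ reduces to this case, while the remaining case $a_2 = a_3 = 0$ is the Fermat-type cubic $X_0$ of Remark~\ref{specialcase1} for which the computation is classical. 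Having fixed the monomial basis, the eigenvalues $\zeta^0,\zeta^1,\zeta^2,\zeta^3,\zeta^4$ are immediate, each with multiplicity one, proving the lemma.
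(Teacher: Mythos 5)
There is a genuine error at the core of your computation: the Griffiths residue identification is stated with the wrong degrees. For a smooth cubic threefold $X=V(F)\subset\PP^4$ the Jacobian ideal $J_F$ is generated by five quadrics, so the Jacobian ring $R=S/J_F$ is Artinian Gorenstein with Hilbert series $(1+t)^5$, i.e.\ dimensions $1,5,10,10,5,1$ in degrees $0,\dots,5$ and $R_6=0$. Griffiths' formula gives $H^{n-q,q}_{\mathrm{prim}}\cong R_{(q+1)d-n-2}$, so here $H^{2,1}(X)\cong R_1$ and $H^{1,2}(X)\cong R_4$ (this is the degree the paper uses), not $R_3$ and $R_6$ as you claim. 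Consequently the central step of your plan --- exhibit a monomial basis of $R_6$, ``one monomial per weight class,'' using $\dim R_6=5$ --- cannot be carried out: that space is zero, and the Hilbert-function justification you sketch ($(1+t+t^2)^5$, which would be the quartic case) is also incorrect. The sample monomials you list ($x_0^2x_1x_4$, $x_0^2x_2x_3$) are in fact of degree $4$, and moreover both have weight $0$, so even as a sketch of the intended basis the weight bookkeeping is off. A further, smaller, gap: your claim that a Gröbner basis computed over $k(a,b)$ ``holds identically'' for every $(a,b)$ with $D(a,b)\neq0$ needs an argument (e.g.\ semicontinuity of the ranks of the weight components of $(J_F)_4$ together with the constancy of $\dim R_4=5$ on the smooth locus); as stated it is an assertion, not a proof.

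The overall strategy --- decompose the relevant graded piece of $R$ into eigenspaces for the diagonal $\alpha_X$-action and show each of the five weights occurs with multiplicity one --- is salvageable once you replace $R_6$ by $R_4$, but it then still requires an explicit computation for all admissible parameters. The paper avoids this entirely: since the smooth members $F_{\underline a}$ with the fixed diagonal automorphism form a connected family and the eigenvalues of a finite-order automorphism on $H^{1,2}$ are locally constant, they are independent of $F$, so one may evaluate them at the Fermat cubic with the order-five automorphism realized as the cyclic permutation of the coordinates; there $R_4$ has the basis $r_j=x_0\cdots\hat x_j\cdots x_4$, which is permuted cyclically, i.e.\ $R_4$ is the regular representation of $\ZZ/5\ZZ$, and the lemma follows with no Gröbner computation. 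You should either adopt that deformation argument or, if you keep the direct computation, fix the degree to $4$ and supply the specialization step.
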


\begin{proof}
Using Griffiths' theory of residues one has a natural
isomorphism (\cite[Corollary 6.12]{V2}) $H^{1,2}(X)\cong R_F^4$,
where $R_F^4=(S/J_F)_4$ is the degree four part of the Jacobian ring, 
so $S=\CC[x_0,\ldots,x_4]$ and $J_F$ is the ideal generated by the 
partial derivatives of $F$. 
The eigenvalues do not depend on the choice of $F$, so we can take 
$F=x_0^3+\ldots+x_4^3$, the Fermat cubic, and for the order five automorphism 
we take the cyclic permutation of the variables.
Then $R_F^4$ has a basis $r_j:=x_0\cdots \hat{x}_j\cdots x_4$, 
with $j=0,\ldots,4$. Then $\alpha_X^*r_j=r_{j+1}$, where we put $r_5:=r_0$
and thus the eigenvalues are as stated in the lemma.
\end{proof}

\begin{prop}\label{finedecij}
The intermediate Jacobian $J(X)$
is isogenous to the product:
$$
J(X)\,\sim\,E\,\times\,B^2~,
$$
where $B$ is an abelian surface, moreover 
$\QQ(\sqrt{5})\subset {\rm End}_{\overline{k}}(B)\otimes_{\Z}\Q$.
\end{prop}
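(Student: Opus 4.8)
The plan is to exploit the $D_5$-action on $J(X)$ coming from Lemma \ref{d5x}. By Lemma \ref{decij} the generator $\alpha_X$ acts on the $5$-dimensional space $H^{1,2}(X)$ (equivalently on the tangent space of $J(X)$ at $0$) with the five fifth roots of unity $1,\zeta,\zeta^2,\zeta^3,\zeta^4$ each appearing once; equivalently, on $H^3(X,\Q)$ the automorphism $\alpha_X$ acts with minimal polynomial dividing $(t-1)(t^5-1)$, and since the whole of $H^3$ is $10$-dimensional while the $\zeta^j$-eigenspaces for $j\neq 0$ are each $2$-dimensional in $H^3(X,\C)$, the eigenvalue $1$ must occur with multiplicity $2$ as well. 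Thus $H^3(X,\Q)$ decomposes under $\Q[\alpha_X]=\Q\times\Q(\zeta_5)$ as $V_0\oplus V_1$ where $V_0$ is $2$-dimensional over $\Q$ (the $\alpha_X$-invariants) and $V_1$ is an $8$-dimensional $\Q$-vector space that is $2$-dimensional over $\Q(\zeta_5)$. Correspondingly, up to isogeny $J(X)\sim A_0\times A_1$ with $\dim A_0 = 1$ and $\dim A_1 = 4$, and $\Q(\zeta_5)\subset \mathrm{End}(A_1)\otimes\Q$. Set $E := A_0$, an elliptic curve.

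Next I would bring in the involution $\iota_X$. Since $\iota_X\alpha_X\iota_X^{-1}=\alpha_X^{-1}$, the involution $\iota_X^*$ preserves the isotypic decomposition: it preserves $V_0$ and it maps the $\zeta$-eigenspace to the $\zeta^{-1}$-eigenspace, etc. Hence $\iota_X^*$ acts $\Q$-linearly on $A_1$ and is semilinear over $\Q(\zeta_5)$ relative to complex conjugation $\zeta_5\mapsto\zeta_5^{-1}$, which fixes the real subfield $\Q(\sqrt5)\subset\Q(\zeta_5)$. Therefore $\iota_X^*$ commutes with the action of $\Q(\sqrt5)$. Now decompose $A_1$ under $\iota_X^*$ into its $(+1)$- and $(-1)$-eigen-isogeny-factors $A_1^+$ and $A_1^-$: both are stable under $\Q(\sqrt5)$. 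On $H^3(X,\C)$ one computes, from the explicit action of $\iota_X$ on the residue basis of $R^4_F$ used in Lemma \ref{decij} (or just from the fact that $\iota_X$ swaps the two $\Q(\zeta_5)$-basis vectors spanning $V_1\otimes\C$ in a way that interchanges conjugate lines), that the $(+1)$- and $(-1)$-parts each have dimension $2$, so $\dim A_1^+ = \dim A_1^- = 2$, and each carries an action of $\Q(\sqrt5)$.

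It then remains to identify $A_1^+$ and $A_1^-$ with a single abelian surface $B$ up to isogeny. For this I would produce an explicit $\overline k$-isogeny between them, the natural candidate being multiplication by a suitable power of $\alpha_X$ composed with $\iota_X$, or more precisely: the element $\sigma := \alpha_X\iota_X \in D_5$ is another involution, and $\sigma^*$ intertwines the $\pm$-eigenspaces of $\iota_X^*$ (since $\sigma$ and $\iota_X$ are conjugate reflections in $D_5$, being $\sigma = \alpha_X\iota_X$ and using $\alpha_X^{1/2}$-type intertwiners is not available, but any two reflections in $D_5$ are conjugate by a rotation, and that rotation power, applied on homology, carries the $\iota_X$-eigenspaces to the $\sigma$-eigenspaces while the common $\sigma,\iota_X$-invariant subfield $\Q(\sqrt5)$ is respected). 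Concretely, pick $g\in D_5$ with $g\iota_X g^{-1}=\sigma$; then $g^*\colon A_1\to A_1$ is an isogeny carrying $A_1^{+}\xrightarrow{\sim} $ ($\sigma^*=+1$ part) and one checks this latter part is isogenous to $A_1^-$ because $\sigma^*$ and $\iota_X^*$ differ by the rotation $\alpha_X^*$ whose square root is not needed: on the $8$-dimensional $V_1$ the two commuting involutions $\iota_X^*,\sigma^*$ together with the rotation generate an action of $D_5$ whose unique faithful $\Q$-irreducible $4$-dimensional component, base-changed, shows the $+$ and $-$ parts are abstractly isomorphic as $\Q(\sqrt5)$-Hodge structures. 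Defining $B := A_1^+$, this gives $A_1\sim B\times B = B^2$ with $\Q(\sqrt5)\subset\mathrm{End}_{\overline k}(B)\otimes\Q$, hence $J(X)\sim E\times B^2$, as claimed.

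The main obstacle I expect is the last step: verifying cleanly that the two $\iota_X$-eigen-factors $A_1^{\pm}$ are genuinely isogenous (not merely of the same dimension and bearing a $\Q(\sqrt5)$-action). The cheapest rigorous route is representation-theoretic: show that $H^3(X,\Q)$, as a module over the group algebra $\Q[D_5]$, is $V_0 \oplus W$ where $W$ is a multiple of the unique $4$-dimensional irreducible $\Q$-representation of $D_5$ — and since that $4$-dimensional irreducible restricted to $\langle\iota_X\rangle$ is $2\cdot(\text{trivial})\oplus 2\cdot(\text{sign})$ and restricted to $\langle\alpha_X\rangle$ is $\Q(\zeta_5)$, the abelian-variety factor it cuts out has, by Poincaré reducibility together with the commutant $\Q(\sqrt5)$ of $\Q[D_5]$ acting on $W$, the form $B^2$ for a single abelian surface $B$ with RM by $\Q(\sqrt5)$. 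I would therefore organize the proof around computing the $\Q[D_5]$-module structure of $H^3(X,\Q)$ via character theory (dimension counts already force $W\cong (\text{4-dim irred})^{\oplus 2}$), and then invoke the standard dictionary between $\Q[G]$-module decompositions of $H^1$ (here $H^3$) and isogeny decompositions of abelian varieties with $G$-action.
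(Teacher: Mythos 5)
Your proposal is correct in substance and its first half coincides with the paper's own argument: both split off the elliptic curve as the $\alpha_X$-invariant part using Lemma \ref{decij}, and both use $\iota_X\alpha_X\iota_X^{-1}=\alpha_X^{-1}$ to see that $\iota_X^*$ swaps conjugate eigenlines on the complementary fourfold, so that its $\pm1$-parts are abelian surfaces on which $\alpha_X^*+(\alpha_X^{-1})^*$ (which satisfies $x^2+x-1=0$) acts, giving the $\QQ(\sqrt{5})$. Where you genuinely diverge is the key identification of the two surfaces up to isogeny. Your first attempt (conjugating $\iota_X$ to $\sigma=\alpha_X\iota_X$ by some $g\in D_5$) does not do the job as stated: $g^*$ only matches the $+$-part of $\iota_X^*$ with the $+$-part of $\sigma^*$, and ``abstractly isomorphic as $\QQ(\sqrt{5})$-Hodge structures'' is precisely what has to be proved; you rightly flag this and replace it by the representation-theoretic argument, which is sound: the eigenvalue count forces $H^3(X,\QQ)\cong V_0\oplus W^{\oplus 2}$ with $W$ the unique $4$-dimensional $\QQ$-irreducible representation of $D_5$, and the simple factor $M_2(\QQ(\sqrt{5}))$ of $\QQ[D_5]$ yields, via the standard group-algebra (isotypic) decomposition for abelian varieties with group action, an isogeny of the fourfold with $B^2$ together with $\QQ(\sqrt{5})\subset{\rm End}_{\overline{k}}(B)\otimes_\Z\QQ$ --- note the module isomorphism alone does not suffice, since the two copies of $W$ need not be Hodge substructures; one needs the idempotent/unit argument of that theorem. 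The paper instead argues directly and more elementarily: $B_\pm$ are the (connected) kernels of $\iota_X^*\pm[1]$ on the fourfold, they are isogenous because $\alpha_X^*$ does not preserve them (its eigenvectors are not contained in the $\pm1$-eigenspaces of $\iota_X^*$ on the tangent space), and the real multiplication is exhibited by $\alpha_X^*+(\alpha_X^{-1})^*$; your route buys a citable general theorem at the cost of invoking it, the paper's stays self-contained. One small slip to fix in your write-up: the $\pm1$-eigenspaces of $\iota_X^*$ on the relevant $8$-dimensional piece of $H^3(X,\CC)$ are $4$-dimensional each; it is on the tangent space (the $H^{1,2}$-part) that they are $2$-dimensional, which is what gives $\dim A_1^{\pm}=2$.
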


\begin{proof}
The kernel of the endomorphism $\alpha_5^*-[1]$, where $[n]$ denotes the multiplication by $n$ on $J(X)$, has a connected component which is an elliptic curve since $\alpha_5^*$ has the eigenvalue $1$ with multiplicity one.
A complementary abelian fourfold $A$ can be defined as the image of $\alpha_5^*-[1]$, 
this fourfold has a automorphism of order five induced by $\alpha_5^*$ and its eigenvalues on $T_0A$, the tangent space at the origin, are the $\zeta^j$, $j=1,2,3,4$.
So we must show that the abelian subvariety $A\subset J(X)$
is isogenous to a selfproduct $B$. For this we use 
that ${\iota}_X\tilde{\alpha_X}{\iota}^{-1}_X\,=\,{\alpha_X}^{-1}$,
which implies that ${\iota}^*_X$ maps the eigenspace of $\alpha_X^*$ with eigenvalue $\zeta^j$ to the eigenspace with eigenvalue $\zeta_j^{-1}$. In particular, ${\iota}^*_X$ has two eigenvalues $+1$ and two eigenvalues $-1$ on $T_0A$. Hence the kernels $B_\pm$
of the endomorphisms ${\iota}_X^*\pm [1]$ of $A$ are abelian surfaces, 
which are isogenous, $B_+\sim B_-$, since ${\alpha}_X^*$ does not preserve them,
so $A\sim B_+\times B_-\sim B_\pm^2$.

The endomorphism ${\alpha}_X^*+({\alpha}^{-1}_X)^*$ of $A$ commutes with ${\iota}_X^*$, hence it acts on the $B_\pm$. As 
${\alpha}_X^*\in End(A)$
satisfies the equation $x^4+x^3+x^2+x+1=0$ (this follows by considering the action on $T_0A$), it follows (divide by $x^2$) that
$(x+x^{-1})^2+(x+x^{-1})-1=0$, thus
${\alpha}_X^*+({\alpha}^{-1}_X)^*$
generates a subfield 
$\QQ(\sqrt{5})\subset {\rm End}_{\overline{k}}(B_\pm)\otimes_\Z\Q$. 
Hence if we put $B:=B_+$ the proposition follows.
\end{proof}

\

\subsection{An algebraic construction of $J(X)$}\label{aiJ} 
We recall an algebraic 
construction of $J(X)$ due to Bombieri-Swinnerton-Dyer \cite{b&s}, Murre \cite{M}, 
Altman-Kleiman \cite{ak} (see also \cite{acm})
which is powerful as it works over any field of characteristic away from 2. 
However we restrict ourself to the case of the characteristic zero.  

Let $k$ be a field as in Section 1.  
Let $X$ be a smooth cubic threefold over $k$ defined by a cubic equation $F$ and 
let $S$ be the Hilbert scheme
of lines of $X$, which is a smooth surface over $k$.  
Then by the general theory of Fano threefolds
(cf.\cite{b&s},\cite{manin}), the Grothendieck motive $M:= h_3(X)$ associated to $X$ over $k$ coincides with the
motives $h_1(A)(\mathbb{L})$ over $k$ associated to the Albanese variety $A$ of $S$ where $\mathbb{L}$ stands for 
the Lefschetz motive. Note that $A$ is also defined over $k$. 
This $A$ is nothing but an algebraic substitute of $J(X)$. 
In fact, if $k\subset \C$, then $A_\C$ is isogenous to $J(X)$ 
by taking realizations of $h_3(X)=h_1(A)(\mathbb{L})$:
$$
A_\C\simeq H^1(A_\C,\C)/F^1H^1(A_\C,\C)+H^1(A,\Z)\sim H^3(X,\C)/F^2H^3(X,\C)+H^3(X,\Z)=J(X).
$$
We can not conclude that this map is an isomorphism since the equality  
$h_3(X)=h_1(A)(\mathbb{L})$ 
holds in the category of Grothendieck motives with the coefficients in $\Q$. 

By taking algebraic de Rham cohomology one has 
a functorial isomorphism 
$$H^3_{{\rm dR}}(X)\simeq H^1_{{\rm dR}}(A)$$
between $k$-vector spaces. 
This isomorphism also preserves the Hodge filtrations of both sides. 

We now describe the cohomology of the LHS. Notice that
$F^2H^3_{{\rm dR}}(X)$ `replaces' $H^{1,2}(X)$ in this section. 

We denote by $R=k[x_0,\ldots,x_4]$ the polynomial ring over $k$ 
with five variables and $R_d$ is the $k$-vector space of all homogeneous 
polynomial of degree $d$ $(\in \Z_{\ge 0})$. 
Consider $U:=\mathbb{P}^4\setminus X$. Then $U$ is an affine variety 
over $k$ with coordinate ring $\Gamma(U,\mathcal{O}_U)$, 
which consists of the homogeneous elements of degree 0 in 
$R[\frac{1}{F}]$. 
By excision for $(X,U)$  (cf. Theorem (3.3), p.40 and Proposition (3.4), p.41 of \cite{h}), there exists an isomorphism 
$H^3_{{\rm dR}}(X)\simeq H^4_{{\rm dR}}(U)$ as $k$-vector spaces. 
Furthermore, this isomorphism commutes with the action of 
${\rm Aut}(X)\cap {\rm Aut}(\mathbb{P}^4)$. 

Since $U$ is affine, 
$H^4_{{\rm dR}}(U)={\rm Ker}(d:\Omega^4_U\longrightarrow \Omega^5_U=0)/
{\rm Im}(d:\Omega^3_U\longrightarrow \Omega^4_U)$. 
Furthermore right hand side can be written as 
$$
\Big\{\frac{A\Omega}{F^i}\Big|\ A\in R_{3i-5},\ i=2,3,\cdots  \Big\}\Big/
\Big\{\partial_j\Big(\frac{A}{F^i}\Big)\Omega \Big|\ 
j=0,1,2,3,4,\ A\in R_{3i-4},\ i=2,3,
\cdots  \Big\}~,
$$
where $\Omega=\sum_{i=0}^4(-1)^i x_i dx_0\wedge \cdots \wedge 
\widehat{dx_i} \wedge \cdots dx_4$. 
 The subspace
$F^2H^3_{{\rm dR}}(X)$ corresponds to 
the image of 
$\Big\{\ds\frac{A\Omega}{F^{i+1}}\Big|\ A\in R_{3i-2}, i=1,2,\cdots  \Big\}$ 
in $H^4_{{\rm dR}}(U)$ and the
$\displaystyle\frac{x_i \Omega}{F^2}$, $i=0,\ldots,4$ 
give a basis of $F^2H^3_{{\rm dR}}(X)$.  

We now assume that $X$ admits an automorphism $\alpha_X$ of order five and $k$ contains $\zeta$.  
By changing the coordinates if necessary, we may assume $\alpha_X$ is the automorphism of $\mathbb{P}^4$ given 
at the beginning of Section \ref{stmo}. Then it is easy to see that 
$$v_j=\frac{x_j\Omega}
{F^2},\ j=0,1,2,3,4$$
are eigenvectors with eigenvalues $\zeta^j$ with respect to $\alpha_X$. 
By using this we can recover Proposition \ref{finedecij} for $A$:
\begin{prop}\label{general}
Let $A$ be the abelian five-fold as above. Then $A$ is isogenous over $k$ to 
the product of an elliptic curve $E$ over $k$ and the self-product of an abelian surface $B$ over $k$ so that 
$\Q(\sqrt{5})\subset {\rm End}_k(B)\otimes_\Z\Q$:
$$A\stackrel{k}{\sim}E\times B^2.$$
\end{prop}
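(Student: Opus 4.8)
The plan is to run the argument of Proposition~\ref{finedecij} entirely inside the category of Grothendieck motives (equivalently, using the algebraic de Rham description just set up), so that every isogeny we produce is automatically defined over $k$ rather than over $\C$. The key point is that all the endomorphisms used in the proof of Proposition~\ref{finedecij} — namely $\alpha_X^*$, $\iota_X^*$, and the polynomials in them — come from automorphisms of $X$ that are defined over $k$ (by Lemma~\ref{d5x}, since $\zeta\in k$), hence act on $h_3(X)=h_1(A)(\mathbb{L})$ and therefore on $A$ over $k$.

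First I would record that, since $X$ carries the $D_5$-action over $k$, the group $D_5$ acts on the motive $h_3(X)$ and hence on $A$ by $k$-endomorphisms; concretely, $\alpha_X$ induces $\alpha_5^{*}\in\mathrm{End}_k(A)$ and $\iota_X$ induces $\iota_X^{*}\in\mathrm{End}_k(A)$, with $\iota_X^{*}\alpha_5^{*}\iota_X^{*}=(\alpha_5^{*})^{-1}$. Next, exactly as before, define the elliptic curve $E\subset A$ as (the connected component of) $\ker(\alpha_5^{*}-[1])$ and the complementary fourfold as the image of $\alpha_5^{*}-[1]$; both are defined over $k$ because $\alpha_5^{*}-[1]\in\mathrm{End}_k(A)$, and the eigenvalue computation showing $\dim E=1$ and that the remaining eigenvalues on $T_0A$ are the primitive fifth roots of unity is the same as in Lemma~\ref{decij}, now read off from the $v_j=x_j\Omega/F^2$ basis of $F^2H^3_{\mathrm{dR}}(X)$. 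This gives $A\stackrel{k}{\sim}E\times A'$ with $A'$ a fourfold over $k$ on which $\alpha_5^{*}$ satisfies $x^4+x^3+x^2+x+1=0$.

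Then I would split $A'$ using $\iota_X^{*}$: on $T_0A'$ it has eigenvalues $+1,+1,-1,-1$ (it swaps the $\zeta^j$- and $\zeta^{-j}$-eigenlines), so $B_{\pm}:=\ker(\iota_X^{*}\mp[1])\subset A'$ are abelian surfaces defined over $k$, and $A'\stackrel{k}{\sim}B_{+}\times B_{-}$. The isogeny $B_{+}\sim B_{-}$ is realized over $k$ by $\alpha_5^{*}-(\alpha_5^{*})^{-1}$ (or by $\alpha_5^{*}$ followed by projection), which is a $k$-morphism not commuting with $\iota_X^{*}$, hence carries $B_{+}$ into $B_{-}$ with finite kernel; so $A'\stackrel{k}{\sim}B^2$ with $B:=B_{+}$. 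Finally, $\alpha_5^{*}+(\alpha_5^{*})^{-1}\in\mathrm{End}_k(A')$ commutes with $\iota_X^{*}$, hence restricts to a $k$-endomorphism of $B$, and from $x^4+x^3+x^2+x+1=0$ one gets $(x+x^{-1})^2+(x+x^{-1})-1=0$, so this endomorphism generates $\Q(\sqrt5)\subset\mathrm{End}_k(B)\otimes_{\Z}\Q$, as required.

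The only genuine subtlety — the main obstacle — is bookkeeping about the field of definition: one must be careful that the idempotent-type constructions ($\ker$, $\mathrm{image}$, connected components) of $k$-endomorphisms of an abelian variety over a field of characteristic zero are again defined over $k$, and that the complementary abelian subvarieties produced by Poincar\'e reducibility can be taken over $k$. This is standard (Poincar\'e reducibility holds over any field, and the neutral component of the kernel of a $k$-morphism is a $k$-subgroup), but it is the step where the proof of Proposition~\ref{general} differs from that of Proposition~\ref{finedecij} and must be invoked explicitly; everything else is a verbatim transcription of the earlier argument, with Hodge-theoretic eigenvalue computations replaced by the identical computations on $F^2H^3_{\mathrm{dR}}(X)$ via the $v_j$.
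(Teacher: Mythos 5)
Your proposal is correct and coincides with the paper's intended argument: the paper's proof of Proposition \ref{general} consists only of the remark that the proof of Proposition \ref{finedecij} carries over, with the eigenvalue computations now read off from the basis $v_j=x_j\Omega/F^2$ of $F^2H^3_{{\rm dR}}(X)$, which is exactly what you spell out, including the bookkeeping that $\alpha_X^*$, $\iota_X^*$ and the kernels/images they define are $k$-rational. The one caveat is that Lemma \ref{d5x} produces the involution $\iota_X$ only after a coordinate change over $\overline{k}$, so strictly speaking one needs $\iota_X$ (hence the full $D_5$) to be defined over $k$ --- e.g.\ for the standard model $X_{a,b}$ over $k$ --- but this issue is inherent in the paper's statement of the proposition rather than a defect of your argument.
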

\begin{proof}The proof is completely similar to Proposition \ref{finedecij} and is
therefore omitted. 
\end{proof}
Henceforth $J(X)$ stands for the intermediate Jacobian for $X$ in Section \ref{iJ} if 
$k\subset \C$ and the Albanese variety $A$ associated to $X$ in Section \ref{aiJ} otherwise. 
In both case we say that $J(X)$ is the intermediate Jacobian of $X$.

\begin{prop}\label{specialcase2}The intermediate Jacobian of the smooth cubic threefold 
$$X_0:\quad x^3_0+x^2_1x_3+x_1x^2_2+x_2x^2_4+x^2_3x_4=0$$
in Remark \ref{specialcase1} is isogenous to the five-fold product of an elliptic curve:
$$J(X_0)\,\stackrel{\overline{k}}{\sim}\,E^5_0,\qquad E_0:\quad y^2=x^3+1~.  $$
\end{prop}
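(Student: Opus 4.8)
The plan is to exhibit a large group of automorphisms of $X_0$ and use it to force the intermediate Jacobian to split completely. The cubic $X_0$ is the specialization of the standard form at a degenerate locus ($a_2=a_3=0$), and by Remark~\ref{specialcase1} it is isomorphic over $\overline{k}$ to the cubic written there, so we may work with $X_0$ directly. The first step is to identify the full relevant automorphism group: besides the order-five automorphism $\alpha_{X_0}$ acting as $(x_0:\zeta x_1:\zeta^2 x_2:\zeta^3 x_3:\zeta^4 x_4)$, the monomial structure $x_1^2x_3+x_1x_2^2+x_2x_4^2+x_3^2x_4$ is a ``cyclic'' arrangement of the four variables $x_1,x_2,x_3,x_4$ of the same shape as for a Klein-type or Fermat-type cubic; I would look for a second automorphism of order five (or an automorphism normalizing $\langle\alpha_{X_0}\rangle$) coming from a suitable scaling-and-permutation of $x_1,x_2,x_3,x_4$ fixing $x_0$, together with the involution $\iota_X$ inherited from Lemma~\ref{d5x}. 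Concretely one checks that $X_0$ is projectively equivalent to a cyclic cubic threefold, e.g.\ to $y_0^3 = G(y_1,\dots,y_4)$ for an appropriate quartic-free cubic form, making the cyclic $\mathbb{Z}/3$-cover structure visible.

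The key structural input is then that $J(X_0)$ carries an action of $\mathbb{Z}/3$ (from the cyclic cover $X_0\to\mathbb{P}^3$) and of the $D_5$ already constructed, and that these generate a group acting on $H^{1,2}(X_0)$ (equivalently $R_F^4$, by \cite[Corollary 6.12]{V2} as in the proof of Lemma~\ref{decij}) whose character theory is incompatible with any positive-dimensional simple factor other than elliptic curves with CM by $\mathbb{Q}(\sqrt{-3})$. The second step is therefore the representation-theoretic bookkeeping: decompose $R_{F}^4$ as a module over this automorphism group and over the CM field $\mathbb{Q}(\zeta_5)$ (recall $\alpha_{X_0}^*$ has eigenvalues $\zeta^0,\dots,\zeta^4$ each once, Lemma~\ref{decij}), and observe that the extra symmetry of $X_0$ identifies the four abelian-surface ``slices'' of $B^2$ from Proposition~\ref{finedecij} with elliptic curves. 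More efficiently: the Fermat-like cubic $X_0$ has enough automorphisms that $J(X_0)$ is dominated by (a power of) the Jacobian of the Fermat quintic curve, or directly that the Hodge structure $H^3(X_0)$ is of CM type; since a simple CM abelian variety of dimension $>1$ would have a reflex field strictly containing $\mathbb{Q}$ and incompatible multiplicities, one concludes $J(X_0)$ is isogenous to a product of elliptic curves with CM by $\mathbb{Q}(\sqrt{-3})$, hence (all such being isogenous over $\overline{k}$) to $E_0^5$ with $E_0:\ y^2=x^3+1$.

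Alternatively, and perhaps more cleanly, I would use the algebraic/Prym description: since $X_0$ lies in the family treated in the rest of the paper, its surface of lines $S$ and the associated curves $\cH_l$ degenerate, and the genus-two curve whose Jacobian is isogenous to $B$ (constructed later in the paper) acquires extra automorphisms forcing its Jacobian to split into a product of two copies of a CM elliptic curve; tracking the real-multiplication element $\alpha_X^*+(\alpha_X^{-1})^*$ and the residual $\mathbb{Z}/3$ shows that curve becomes $y^2 = x^6 + \text{(lower, very symmetric)}$ or a bielliptic curve dominating $E_0$, giving $B\sim E_0^2$ and hence $J(X_0)\sim E\times B^2\sim E_0^5$ once one also checks $E\sim E_0$ via its explicit model as a quotient of $\cH_l$.

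The main obstacle I anticipate is the second step done honestly: showing that the \emph{only} way the combined automorphism action is consistent is a total split into CM elliptic curves, i.e.\ ruling out a simple abelian surface or fourfold with quaternionic or $\mathbb{Q}(\zeta_5)$-multiplication that could a priori survive. This is a finite check in the representation theory of the automorphism group on $R_F^4\cong H^{1,2}(X_0)$ combined with the classification of CM types, and pinning down the precise isogeny constant $E_0:\ y^2=x^3+1$ (rather than some other CM-by-$\mathbb{Q}(\sqrt{-3})$ curve) requires exhibiting $E_0$ as an actual subquotient — most transparently by producing an explicit dominant rational map from $X_0$, or from one of the auxiliary curves of genus two or four, to $E_0$.
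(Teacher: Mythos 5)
You correctly identify the decisive extra structure: since $a_2=a_3=0$, the variable $x_0$ appears only as $x_0^3$, so $X_0$ is a cyclic triple cover of $\PP^3$ and the scaling $(x_0:x_1:\dots:x_4)\mapsto(\zeta_3 x_0:x_1:\dots:x_4)$ acts on $J(X_0)$; this is exactly the automorphism the paper uses, on top of the decomposition $J(X_0)\sim E_0\times B_0^2$ from Proposition \ref{finedecij}. But the step you yourself flag as the main obstacle is precisely where your argument has a genuine gap, and the justifications you sketch for it do not work. ``$H^3(X_0)$ is of CM type'' does not imply a total split into elliptic curves (a primitive CM type on $\Q(\zeta_{15})$ or on $\Q(\sqrt5,\sqrt{-3})$ would give a simple abelian fourfold or surface), and the assertion that ``a simple CM abelian variety of dimension $>1$ would have a reflex field strictly containing $\Q$ and incompatible multiplicities'' is not a valid criterion: every CM abelian variety has reflex field bigger than $\Q$, and no multiplicity obstruction is actually exhibited. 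The suggestion that $J(X_0)$ is dominated by a power of the Jacobian of the Fermat quintic is also unsupported ($X_0$ lies outside the $X_{a,b}$ family, and no map is produced).

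What actually closes the gap in the paper is a short, concrete computation plus classical CM theory: using the explicit de Rham basis $v_j=x_j\Omega/F^2$ of Section \ref{aiJ}, the $\zeta_3$-scaling acts by the scalar $\zeta_3^2$ on $F^1H^1_{\rm dR}(E_0)$ and by the scalar $\zeta_3$ on all of $F^1H^1_{\rm dR}(B_0)$. Hence $E_0$ has CM by $\Q(\sqrt{-3})$, and $B_0$ carries both $\Q(\sqrt5)$ (from Proposition \ref{finedecij}) and $\Q(\zeta_3)$ in its endomorphism algebra, i.e.\ CM by the biquadratic field $\Q(\sqrt5,\sqrt{-3})$; because the $\zeta_3$-action is scalar on the tangent space, the CM type is induced from $\Q(\sqrt{-3})$, so by Shimura--Taniyama $B_0$ is non-simple and $B_0\stackrel{\overline k}{\sim}E^2$ with $E$ again of CM by $\Q(\sqrt{-3})$. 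This also answers your final worry: no explicit dominant rational map to $E_0$ is needed, since over $\overline k$ all elliptic curves with CM by an order in $\Q(\sqrt{-3})$ are isogenous, and $y^2=x^3+1$ is one of them. Your outline would become a proof once you replace the representation-theoretic/reflex-field heuristics by this scalar-action computation and the (im)primitivity argument for the CM type.
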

\begin{proof}By Proposition \ref{finedecij} one has $J(X_0)\stackrel{k}{\sim} E_0\times B^2_0$.  
Let $\zeta_3$ be a primitive third root of unity. 
The action $\zeta_3:(x_0:x_1:x_2:x_3:x_4)\mapsto (\zeta_3x_0:x_1:x_2:x_3:x_4)$ induces 
the multiplication by $\zeta^2_3$ (resp. $\zeta_3$) on $F^1H^1_{{\rm dR}}(E_{0,\overline{k}})$ 
(resp. $F^1H^1_{{\rm dR}}(B_{0,\overline{k}})$). 
Here we used  the explicit basis of Section \ref{aiJ}.  
This means that $E_0,B_0$ have complex multiplication by $\Q(\sqrt{-3})$ and in particular $E_0$ has an affine model $y^2=x^3+1$ over $\overline{k}$ (up to isogeny). 
It follows from $\Q(\sqrt{5})\subset {\rm End}_k(B)$ that $B$ has non-simple CM by $\Q(\sqrt{5},\sqrt{-3})$ 
(cf.\ \cite[p.40, Proposition 3, 4; p.73, Example 8.4(2)]{st}). 
Therefore we have that $B_0\stackrel{\overline{k}}{\sim} E^2$ for some CM elliptic curve $E$. Recall the action of $\zeta_3$ on 
$F^1H^1_{{\rm dR}}(B_{0,\overline{k}})=\ds F^1H^1_{{\rm dR}}(E_{\overline{k}})^{\oplus 2}$
is multiplication by $\zeta_3$. 
This shows that $E\stackrel{\overline{k}}{\sim}E_0$.  
\end{proof}

\section{The curve $\cH_{a,b}$}

\subsection{Outline}
We use the simple equation $F_{a,b}=0$ for a smooth cubic threefold with an automorphism of order five to find a line $l$ 
in $X$ that is invariant under the $D_5$-action.
We explicitly construct the curve $\cH_l$ that parametrizes the lines in $X$ meeting $l$
and its involution $\iota_l$ following \cite{M}. 
The quotient $H_l:=\cH_l/\iota_l$ is a plane quintic curve and
the associated Prym variety $\mbox{Prym}(\cH_l/H_l)$
is the intermediate Jacobian $J(X)$ of $X$.

\subsection{The conic bundle}
Let $X_{a,b}$ be a smooth cubic threefold as in Lemma \ref{lemmaxab}. 
It has the equation, in $\PP^4$ with coordinates $x,y,z,u,v$,
$$
F_{a,b}:=\quad l_1u^2+2l_2uv+l_3v^2\;+\;2Q_1u+ 2Q_2v\;+\;
C~=0~,
$$
which is of the same form as (16) in \cite{M}, with $l_i,Q_j,C$ homogeneous of degree $1,2,3$ in $x,y,z$ respectively:
$$
l_1=x,\quad
l_2=y,\quad
l_3=z,\quad
Q_1=z^2,\quad
Q_2=x^2,\quad
C=ay^3+bxyz~.
$$

As in \cite[Section 1C]{M} we define a line $l\subset X_{a,b}$:
$$
l:\quad x\,=\,y\,=\,z\,=\,0\qquad(\subset\,X_{a,b})~.
$$
Notice that $l$ is invariant under the $D_5$-action. (There is only one other line
in $X_{a,b}$,
defined by $u=v=y=0$, which is also $D_5$-invariant.)

For a point $T:=(x:y:z)\in\PP^2$ we define a 2-plane $L_T$ in $\PP^4$:
$$
L_T\,:=\,\mbox{span}(l,T);\qquad p\,=\,(xt:yt:zt:u:v)\quad(\in\,L_T)
$$
provides a parametrization of this 2-plane with coordinates $(t:u:v)$ and 
$l$ is defined by $t=0$ in $L_T$.
The intersection of $L_T$ with $X$ is the union of the line $l$ and a conic $K_T$
(\cite[(24)]{M}):
$$
K_T\,:\qquad xu^2+2yuv+zv^2\;+\;2z^2tu+ 2x^2tv\;+\;
t^2(ay^3 + bxyz)=0.
$$
The conic $K_T$ degenerates if the point $T$ is on the quintic curve $H_{a,b}\subset\PP^2$ defined by
$$
\det\begin{pmatrix} x&y&z^2\\y&z&x^2\\z^2&x^2&ay^3+bxyz\end{pmatrix}\;=\,
x^5 - (b + 2)x^2yz^2 - (a - b)xy^3z + ay^5 + z^5=0.
$$
The automorphisms $\alpha_X$, $\iota_X$ of $X_{a,b}$ induce the automorphisms of order five and two respectively on $H_{a,b}$:
$$
\alpha_5,\,\iota:\;H_{a,b}\,\longrightarrow\,H_{a,b},\qquad
\alpha_5((x:y:z))\,=\,(\zeta x: y:\zeta^{-1}z),\quad
\iota((x:y:z))\,=\,(z:y:x)~.
$$
These automorphisms generate of subgroup isomorphic to the 
dihedral group $D_5$ of order ten in ${\rm Aut}(H_{a,b})$.
Similar to the proof of Lemma \ref{discX}, we found that the curve 
$H_{a,b}$ is smooth (and thus has genus $6$) if and only if $D_1(a,b)\neq 0$ where: 
\begin{equation}\label{disc3}
D_1(a,b):=(-1 + a + b) D(a,b).
\end{equation}

\subsection{Remark}\label{ab1}
Notice that the point $T=(1:1:1)$ lies on the quintic curve $H_{a,b}$ for all $a,b$.
The conic $K_T$ is defined by $u^2 + 2uv + 2ut + v^2 + 2vt + (a + b)t^2=0$
which, after substituting $u:=w-v-t$, becomes 
$w^2 + (a + b - 1)t^2=0$. Hence $K_T$ is a double line in case $a+b-1=0$, 
which is the reason it appears in $D_1(a,b)$ in equation (\ref{disc3}).

\subsection{Remark}
If we work over $\Q$ and take $a=b=-2$ 
then $H_{a,b}$ is the (twisted) Fermat quintic curve. 
It has good reduction outside 5 and 2, matching with $D_1(-2,-2)=5^4\cdot 2^4$.   
It is interesting to study the difference of the arithmetic conductor (that is $2^2\cdot 5^2$)  
of the curve  $H_{a,b}$ and our discriminant $D_1(a,b)$.

\subsection{The double cover}
If the quintic curve $H_{a,b}$ that parametrizes the degenerate conics 
is smooth, it has an etale double cover $\cH_{a,b}=\cH(l)_{a,b}$ 
that parametrizes the two irreducible components of the degenerate conic (see
\cite[1.24]{M}).

\begin{prop} If the curve $H_{a,b}$ is smooth 
(so $D_1(a,b)\neq 0$), 
then the double cover
$$
\pi_l:\,\cH_{a,b}\,\longrightarrow\,H_{a,b}
$$
is etale and $\cH_{a,b}$ has genus $11$.
The intermediate Jacobian $J(X)$ of $X$ is isomorphic (as principally polarized abelian variety) to the Prym variety $P_{a,b}=P_{a,b}(l)$ of this double cover:
$$
J(X)\,\cong\,P_{a,b}\,=\,{\rm Prym}(\cH_{a,b}/H_{a,b})~.
$$
Moreover, the double cover $\pi_l$ corresponds to the quadratic extension
$$
k(\cH_{a,b})\,=\, k(H_{a,b})(w),\qquad w^2\,=\,1-xz/y^2~,
$$
of the function field of $H_{a,b}$.
\end{prop}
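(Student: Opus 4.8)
The plan is to obtain the statements about étale\-ness, genus, and the identification $J(X)\cong\mathrm{Prym}(\cH_{a,b}/H_{a,b})$ from the general theory of the conic bundle attached to projection from the line $l=\{x=y=z=0\}$ (Bombieri--Swinnerton-Dyer \cite{b&s}, Murre \cite{M}), and then to make the double cover explicit by diagonalising the generic degenerate conic. Projection from $l$ exhibits the blow\-up of $X_{a,b}$ along $l$ as a conic bundle over $\PP^2$ whose fibre over $T=(x:y:z)$ is the conic $K_T$ written above and whose discriminant is the quintic $H_{a,b}$; the curve $\cH_{a,b}$ parametrising the irreducible components of the degenerate conics is, by construction, a double cover of $H_{a,b}$ (\cite[1.24]{M}), and it is an \emph{etale} double cover with $J(X)\cong\mathrm{Prym}(\cH_{a,b}/H_{a,b})$ as principally polarized abelian varieties provided $l$ is a line of the first type. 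This last condition is precisely the smoothness of $H_{a,b}$, i.e.\ $D_1(a,b)\neq0$: the cover ramifies over $T\in H_{a,b}$ only where $K_T$ is a double line, and the single such $T$ visible on the quintic (namely $T=(1:1:1)$ with $a+b=1$, cf.\ Remark \ref{ab1}) is already excluded by $D_1(a,b)\neq0$, while the rank computation of the next paragraph shows that the remaining rank-one locus does not meet the smooth quintic. Granting this, Riemann--Hurwitz gives the genus: a smooth plane quintic has genus $6$, so for an etale double cover $2g(\cH_{a,b})-2=2(2\cdot 6-2)$ and $g(\cH_{a,b})=11$; consistently, $\dim\mathrm{Prym}(\cH_{a,b}/H_{a,b})=11-6=5=\dim J(X)$.

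It remains to identify the quadratic extension $k(\cH_{a,b})/k(H_{a,b})$, which by construction is the one over which the two components of the generic degenerate conic become rational. I would work over $K=k(H_{a,b})$ and diagonalise the ternary form
$$
q_T=xu^2+2yuv+zv^2+2z^2tu+2x^2tv+(ay^3+bxyz)t^2 .
$$
Since $x$ does not vanish at the generic point of $H_{a,b}$, completing the square in $u$ gives
$$
q_T=x\Bigl(u+\frac{yv+z^2t}{x}\Bigr)^{2}+\frac1x\,Q(v,t),\qquad
Q=(xz-y^2)v^2+2(x^3-yz^2)vt+\bigl(x(ay^3+bxyz)-z^4\bigr)t^2 ,
$$
and a short computation shows that the discriminant of $Q$, as a binary quadratic in $v,t$, equals $-x\det M$, where $M$ is the Gram matrix of $q_T$ (note $\det M=-H_{a,b}$). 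Hence on $H_{a,b}$ the form $Q$ has a double root and factors as $(xz-y^2)(v-\rho t)^2$ with $\rho=(x^3-yz^2)/(xz-y^2)\in K$, so $q_T=xA^2+\frac{xz-y^2}{x}B^2$ with $A=u+(yv+z^2t)/x$ and $B=v-\rho t$. A conic $xA^2+cB^2=0$ splits into two $K$-rational lines precisely after adjoining $\sqrt{-xc}$; here $-xc=y^2-xz$, so the splitting field is $K(\sqrt{y^2-xz})=K(\sqrt{1-xz/y^2})$, the two components being cut out by $(xu+yv+z^2t)\pm\sqrt{y^2-xz}\,(v-\rho t)$. This gives $k(\cH_{a,b})=k(H_{a,b})(w)$ with $w^2=1-xz/y^2$, as claimed.

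I expect the main obstacle to be the first part: verifying that the concrete, $D_5$-invariant --- hence non-generic --- line $l$ still satisfies the genericity hypotheses of \cite{b&s}, \cite{M} needed for Mumford's theorem (that $l$ is of the first type, equivalently that $H_{a,b}$ is smooth; that no degenerate fibre over the smooth quintic is a double line; and that the identification of the principal polarizations goes through verbatim for this $l$). Once that is secured, the genus is immediate from Riemann--Hurwitz, and the determination of the double cover is the essentially mechanical diagonalisation above.
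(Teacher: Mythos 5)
Your overall route is the same as the paper's: invoke Murre's conic--bundle construction for the double cover $\cH_{a,b}\to H_{a,b}$ and the identification $J(X)\cong{\rm Prym}(\cH_{a,b}/H_{a,b})$, get the genus from Hurwitz, and identify the quadratic extension explicitly. Your determination of the splitting field by completing the square in $u$ is correct (up to a harmless sign in $\rho$) and is equivalent to, though a bit heavier than, the paper's: the paper simply restricts $K_T$ to the line $l=\{t=0\}$, obtaining the binary form $xu^2+2yuv+zv^2$, whose discriminant $y^2-xz$ immediately gives $w^2=1-xz/y^2$.

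The genuine weak point is your \'etaleness discussion. The asserted equivalence ``$l$ of the first type $\Leftrightarrow$ $H_{a,b}$ smooth'' is false: being of the first type is a property of $l$ alone, and for this $l$ it holds for \emph{every} $(a,b)$ (the linear forms $x,y,z$ in Murre's normal form are independent; equivalently no plane through $l$ is tangent to $X_{a,b}$ along $l$), whereas $H_{a,b}$ is singular, e.g., whenever $a+b=1$. What the general theory actually needs is exactly the smoothness of the discriminant quintic together with smoothness of the total space (the blow-up of the smooth $X_{a,b}$ along $l$; note $D_1\neq0$ forces $D\neq0$): then, by Beauville/Murre, rank-one fibres lie only over singular points of the discriminant, so over a smooth $H_{a,b}$ every degenerate conic is a pair of distinct lines and the cover is \'etale. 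Your proposed substitute --- that ``the rank computation of the next paragraph shows that the remaining rank-one locus does not meet the smooth quintic'' --- does not establish this: the diagonalization is valid only at points with $x\neq0$ and $xz\neq y^2$, so it says nothing about the five points of $H_{a,b}$ with $y^2=xz$ (nor the points with $x=0$), and those five points are precisely where ramification of $w^2=1-xz/y^2$ could occur. The paper closes exactly this issue by the explicit check that on $z=1$, $x=y^2$ the quintic restricts to $(y^5-1)^2$, i.e.\ the conic $y^2=xz$ is tangent to $H_{a,b}$ at all five intersection points, so $1-xz/y^2$ vanishes to even order and the cover is unramified. Replace the first-type claim by either that computation or the smooth-total-space argument; with that repair, the rest of your argument (Hurwitz giving genus $11$, Murre/Mumford's theorem for a smooth discriminant giving $J(X)\cong{\rm Prym}$ as principally polarized abelian varieties, and the function-field identification) goes through and matches the paper.
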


\begin{proof}
See \cite{M}, we follow it closely in order to obtain the explicit expressions.

For a point $T\in H_{a,b}$, the conic $K_T$ has two irreducible components that 
are lines. For general $T$ the points of intersection of these two lines with $l$ 
will be distinct. The etale double cover $\cH_{a,b}\rightarrow H_{a,b}$ is 
thus defined by the degree two extension of the function field of $H_{a,b}$ 
which corresponds to the square root of discriminant of the quadratric polynomial 
obtained by intersecting $K_T$ with $l$.

The line $l$ is defined by $t=0$, hence $K_T\cap l$ is defined by 
$xu^2+2yuv+zv^2=0$
and this homogeneous quadratic polynomial in the coordinates $u,v$ on $l$ has discriminant 
$\Delta=y^2-xz$. 
Notice that $\Delta=0$ defines a conic in the plane of $H_{a,b}$ and 
that the intersection of $\Delta=0$ with $H_{a,b}$ consists of points with $z\neq 0$, 
putting $z=1$ we find these points by substituting $x=y^2$ into the equation for $H_{a,b}$:
$$
(y^2)^{5}\,-\,(b+2)(y^2)^2y\,-\,(a-b)y^2y^3\,+\,ay^5\,+\,1\,=\,
y^{10}\,-\,2y^5\,+\,1\,=\,(y^5\,-\,1)^2~.
$$
Thus the conic $\Delta=0$ is tangent to $H_{a,b}$ in all five intersection points, as it should be for the double cover it defines to be etale. 
A singular model of $\cH\subset\PP^3$ can be obtained simply by taking the inverse image of $H_{a,b}$ in the quadric surface defined by $w^2=y^2-xz$ (where $(x:y:z:w)$ are the homogeneous coordinates on $\PP^3$). 
That is, the singular model is a complete intersection of bidegree $(5,2)$ in $\PP^3$.

The function field of $H_{a,b}$ is generated by the rational functions $x/y,z/y$ and 
as we observed, the function field of $\cH_{a,b}$ is generated by the square root of $(y^2-xz)/y^2={1-(xz/y^2)}$.
\end{proof}

\subsection{Automorphisms of $\cH_{a,b}$}\label{autch}
From the explicit construction of the curve $\cH_{a,b}$, 
in particular the fact that the rational function $1-xz/y^2\in k(H_{a,b})$
is invariant under the automorphisms $\alpha_5,\iota$ of $H_{a,b}$,
we see that the automorphism $\alpha_X$, $\iota_X$ of the threefold $X_{a,b}$
induce automorphisms $\tilde{\alpha}_5$, $\tilde{\iota}$ on $\cH_{a,b}$
which generate a $D_5\subset {\rm Aut}(\cH_{a,b})$.
The covering involution of the double cover $\cH_{a,b}\rightarrow H_{a,b}$ will be denoted by $\iota_l$ ($\in {\rm Aut}(\cH_{a,b})$).

The action of these automorphisms on the
rational functions $x/y,z/y,w\in k(\cH_{a,b})$
is 
{\renewcommand{\arraystretch}{1.3}
$$
\begin{array}{rlcl}
\iota_l:&(\,x/y,\,z/y,w)&\longmapsto&(\,x/y,\,z/y,\,-w),\\
\tilde{\alpha}_5:&(\,x/y,\,z/y,\,w)&\longmapsto&(\,\zeta x/y,\,\zeta^{-1}z/y,\,w),\\
\tilde{\iota}:&(\,x/y,\,z/y,w)&\longmapsto&(\,z/y,\,x/y,\,w)~.
\end{array}
$$
}
Notice that $\iota_l$ commutes with both $\tilde{\alpha}$ and $\tilde{\iota}$,
hence $\tilde{\alpha}$ and $\tilde{\iota}$ 
induce automorphisms of the Prym variety $Prym(\cH_{a,b}/H_{a,b})$.

Since the line $l$ is fixed by $\iota_X$, this involution on $X_{a,b}$ induces an involution on the curve $\cH_{a,b}$ which parametrizes lines in $X_{a,b}$ meeting $l$. The following lemma identifies this involution.

\begin{lemma} \label{invcH}
The involution $\iota_X$ on the threefold $X_{a,b}$ induces the involution
$\tilde{\iota}$ on $\cH_{a,b}$.

The quotient curves $\cH_{a,b}/\tilde{\iota}$ and 
$\cH_{a,b}/\tilde{\iota}\iota_l$ have genus $4$ and $6$ respectively.
\end{lemma}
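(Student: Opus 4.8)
The plan is to compute everything on the explicit singular model $\cH_{a,b}\subset\PP^3$ given by $w^2=y^2-xz$ together with the quintic $F_5:=x^5-(b+2)x^2yz^2-(a-b)xy^3z+ay^5+z^5=0$, on which $\tilde\iota$ acts by $(x:y:z:w)\mapsto(z:y:x:w)$. First I would verify the first assertion: the line $l$ is defined by $x=y=z=0$ and is fixed setwise by $\iota_X:(u:v:x:y:z)\mapsto(v:u:z:y:x)$, so $\iota_X$ permutes the lines of $X_{a,b}$ meeting $l$ and hence induces an involution on $\cH_{a,b}$. To identify it, track the effect on the coordinates $(t:u:v)$ of the $2$-plane $L_T$ with $T=(x:y:z)$: the map $\iota_X$ sends $L_T$ to $L_{\iota(T)}$ with $\iota(T)=(z:y:x)$ and swaps $u\leftrightarrow v$; since the two components of $K_T$ are distinguished by their intersection point with $l$, i.e. by the sign of $w$ where $w^2=y^2-xz$, and since $y^2-xz$ is invariant under $(x,z)\mapsto(z,x)$ while the swap $u\leftrightarrow v$ does not change which root of $xu^2+2yuv+zv^2=0$ is which (it conjugates $u/v\mapsto v/u$, fixing the unordered pair but in a way one checks preserves the labelling), the induced involution on $k(\cH_{a,b})=k(H_{a,b})(w)$ is $(x/y,z/y,w)\mapsto(z/y,x/y,w)$, which is exactly $\tilde\iota$ as listed in Section~\ref{autch}.

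For the genus computations I would use Riemann--Hurwitz for the two quotients $\cH_{a,b}\to\cH_{a,b}/\tilde\iota$ and $\cH_{a,b}\to\cH_{a,b}/\tilde\iota\iota_l$, both degree-two maps, so in each case $2\cdot 11-2 = 2(2g'-2)+r$, i.e. $g'=(22-r)/4 + 1/2\cdot(\ldots)$; more precisely $20 = 2(2g'-2)+r$, giving $g'=6-r/4$. Thus I need the number $r$ of fixed points of each involution on the smooth curve $\cH_{a,b}$. For $\tilde\iota\iota_l$, the map is $(x:y:z:w)\mapsto(z:y:x:-w)$; a fixed point on $\cH_{a,b}$ requires $(z:y:x:-w)=(x:y:z:w)$, so either $x=z$ and $w=-w$, hence $w=0$, giving points with $x=z$, $w=0$, $F_5(x,y,x)=0$, or the scalar is $-1$, i.e. $(z,y,x,-w)=-(x,y,z,w)$, forcing $y=0$ and $x=-z$; one counts these intersection points on the quintic (they are smooth points of $\cH_{a,b}$ because $H_{a,b}$ is smooth and the double cover is etale, so the count equals the number of corresponding points of $H_{a,b}$, with a sign-of-$w$ analysis). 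Carrying out the two finite computations should give $r=8$ fixed points for $\tilde\iota\iota_l$, hence genus $6-2=4$ — wait, that is the wrong one — so I expect instead $r=8$ for $\tilde\iota$ giving genus $4$, and $r=0$ for $\tilde\iota\iota_l$ giving genus $6$; the point is simply to count fixed points carefully and apply $g'=6-r/4$.

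I expect the main obstacle to be the fixed-point count, specifically doing it on the smooth curve $\cH_{a,b}$ rather than on the singular plane model. The subtlety is that a fixed point of $\tilde\iota$ on $\cH_{a,b}$ lying over $T\in H_{a,b}$ with $x=z$ need not have $w=0$: if $w\ne0$ then $\tilde\iota$ fixes that point iff the swap $u\leftrightarrow v$ fixes the labelling of the two components of $K_T$, which must be analyzed in terms of whether $\iota_X$ preserves or exchanges the two components of the degenerate conic at that $T$; the branch locus of $\cH_{a,b}\to H_{a,b}$ is empty (the cover is etale) but that does not directly give the fixed locus of $\tilde\iota$. I would resolve this by working on $H_{a,b}$ first: $\tilde\iota$ descends to $\iota$ on $H_{a,b}$, whose fixed points are the intersections of $H_{a,b}$ with the line $x=z$ (five points by Bézout, generically reduced) plus the point $(1:0:-1)$, and then lift each fixed point of $\iota$ to see whether zero, one, or two of the two points above it are fixed by $\tilde\iota$, using the formula $w\mapsto w$ combined with the effect of the $u\leftrightarrow v$ swap. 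Once the correct $r$ is in hand for each involution, Riemann--Hurwitz finishes the proof immediately; the whole argument is then a short but careful case analysis plus two applications of a standard genus formula.
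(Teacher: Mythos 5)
Your proposal leaves the two decisive computations undone, and in both cases the answer you pencil in does not survive the computation. For the first assertion, everything hinges on your parenthetical claim that the swap $u\leftrightarrow v$ ``preserves the labelling'' of the two components of $K_T$ by the sign of $w$; that is exactly the content of the statement, and you never check it. If you do check it, the naive bookkeeping goes the other way: the two roots of $xu^2+2yuv+zv^2$ in $u/v$ are $(-y\pm s)/x$ with $s^2=y^2-xz$, and if $u/v=(-y+s)/x$ then $v/u=(-y-s)/z$, which at the image point $(z:y:x)$ is the root labelled by $-s$; so tracking the discriminant square root directly gives $w\mapsto -w$, not $w\mapsto w$. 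Thus this step, as written, fails, and the sign must be decided by a genuine argument. The paper does it by a different route: any lift of $\iota$ is $\tilde\iota$ or $\tilde\iota\iota_l$, and one decides which by determining, over the $\iota$-fixed points of $H_{a,b}$, whether $\iota_X$ preserves or interchanges the two lines of the degenerate conic (for instance $K_{(1:0:-1)}=(u+v)(u-v+2t)$ has both components $\iota_X$-stable, while over the four points of $H_{a,b}\cap\{x=z\}$ other than $(1:1:1)$ the two lines are interchanged).

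For the genera you defer the entire fixed-point count and guess $r=8$ for $\tilde\iota$ and $r=0$ for $\tilde\iota\iota_l$. The value $r=0$ is impossible, and your own case analysis already shows it: every fixed point of either lift lies over one of the six $\iota$-fixed points of $H_{a,b}$ (the five points of $H_{a,b}\cap\{x=z\}$ and $(1:0:-1)$), and over each such point exactly one of the two lifts fixes the two points of the fibre (the lifts differ by the free involution $\iota_l$), so $\#\mathrm{Fix}(\tilde\iota)+\#\mathrm{Fix}(\tilde\iota\iota_l)=12$; concretely, your ``scalar $=-1$'' case produces the two smooth points $(1:0:-1:\pm1)$ of the model $w^2=y^2-xz$, which are fixed by $\tilde\iota\iota_l$. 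Hence neither lift acts freely, genus $6$ is unattainable from your formula $g'=6-r/4$, and a careful count gives $(\#\mathrm{Fix}(\tilde\iota),\#\mathrm{Fix}(\tilde\iota\iota_l))=(8,4)$, i.e.\ genera $4$ and $5$. So the genus-$4$ statement (the one actually used later, via the nodal plane quintic model of $\cD_{a,b}=\cH_{a,b}/\tilde\iota$) is the one your $r=8$ supports, but the other quotient has genus $5$, and no amount of ``careful counting'' will produce $6$; this discrepancy, together with the sign issue above, is precisely what your proposal would have had to confront and does not.
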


\begin{proof}
The involution $\iota_X$ induces $\iota$ on $H_{a,b}$
with $\iota(x:y:z)=(z:y:z)$. 
As $\cH_{a,b}$ is defined by the quadratic equation $w^2=1-xz/y^2$,
and $\iota_l$ changes only the sign of $w$, we must have that $\iota_X$ induces
either $\tilde{\iota}$ or $\tilde{\iota}\iota_l$ on $\cH_{a,b}$. To find out which, we consider the fixed points of these involutions on $\cH_{a,b}$.

The fixed points of $\tilde{\iota}$ and $\tilde{\iota}\iota_l$ 
map under $\pi_l$ to the fixed points of $\iota$ on $H_{a,b}$. 
These are the point $(1:0:-1)$ and the five points of intersection of $H_{a,b}$ with the line $x=z$, one of which is $(1:1:1)$.  In the other four points, the function $1-xz/y^2$ has a non-zero value and thus the 8 points on $\tilde{H}_{a,b}$ mapping to them are fixed points of $\tilde{\iota}$, and thus none of these points is a fixed point of $\tilde{\iota}\iota_l$.

It is not hard to check that the corresponding 4 pairs of lines on $X_{a,b}$ are
interchanged by $\iota_X$, hence $\iota_X$ induces $\tilde{\iota}$ on $\cH_{a,b}$.

The fixed points of $\tilde{\iota}$ on $\cH_{a,b}$ are thus among the four points which map to $(1:0:-1)$ and $(1:1:1)$ and 
they correspond to the lines meeting $l$ which are fixed by $\iota_X$.
For $T=(1:0:-1)$ these lines form the conic $K_T$ which is 
$(u - v + 2t)(u+v)=0$. The line on $X_{a,b}$ corresponding to $u+v=0$ is the line parametrized by $(u:v:x:y:z)=(s:-s:t:0:-t)$,
and this line is mapped into itself by $\iota_X$. Thus both of the points on $\cH_{a,b}$ mapping to $T$ are fixed under $\tilde{\iota}$. Similarly,
for $(1:1:1)\in H_{a,b}$,  the two corresponding
lines on $X_{a,b}$ are parametrized by $(u:v:x:y:z)=
(s+ct:-s+ct:t:t:t)$, with $4c^2+4c+a+b=0$, and thus both are fixed by $\iota_X$. 
Therefore $\tilde{\iota}$ has $4$ fixed points 
and $\tilde{\iota}\iota_l$ has $8$ fixed points on $\cH_{a,b}$. 

The genera of the quotient curves now follow from the Hurwitz formula.
\end{proof}

\

\section{Quotients of $\cH_{a,b}$}

\subsection{Outline}
In this section we determine the isogeny classes of the factors $E_{a,b}$ and $B_{a,b}$ of 
$J(X_{a,b})$ explicitly. 
Throughout this section we assume that $X_{a,b}$ is smooth, 
hence $D(a,b)\not=0$. 

\subsection{Quotient maps of degree five}\label{qm5}
We determine the quotient curves $\overline{H}_{a,b}$ and ${\overline{\cH}}_{a,b}$ of $H_{a,b}$ and $\cH_{a,b}$ 
by the automorphisms of order five.
In Proposition \ref{elliptic} we will show that
these quotient curves have genus two and three respectively,
thus these quotient maps are etale. 
Moreover, the Prym variety of the double cover $\overline{\cH}_{a,b}\rightarrow \overline{H}_{a,b}$ is thus an elliptic curve $E_{a,b}'$ which we determine explicitly, it is in fact a quotient of 
${\overline{\cH}}_{a,b}$. All these curves and quotient maps fit in the following diagram:
$$
\begin{CD}
{{\cH}}_{a,b}  @> 5:1 >>  \overline{\cH}_{a,b}:=\cH_{a,b}/\tilde{\alpha}_5
@>2:1>>E_{a,b}'\\
  @V 2:1 VV @V 2:1 VV @.\\
 {H}_{a,b}  @>5:1 >> \overline{H}_{a,b}\,:=\,H_{a,b}/\alpha_5@. 
\end{CD}
$$

\begin{prop}\label{elliptic} 
The curves in the diagram above have defining equations:
{\renewcommand{\arraystretch}{1.5}
$$
\begin{array}{rcl}
{\overline{\cH}}_{a,b}:&\quad& r^2=f_4(1-w^2)~,
\\
{\overline{H}}_{a,b}:&& s^2\,=\,(t-1)f_4(t)~,
\\
E_{a,b}':&&r^2=f_4(1-w)~,
\end{array}
$$
}
where the degree four polynomial $f_4$ is defined by:
$$
f_4(t)\,=\,
t^4 \,-\, \Big(\frac{b^2}{4} + b\Big)t^3\, -\, 
\Big(\frac{ab}{2} + a - \frac{b^2}{4}\Big)t^2 \,-\, 
\Big(\frac{a^2}{4} -  \frac{ab}{4}\Big)t \,+\, \frac{a^2}{4}~.
$$

A Weierstrass model for $E_{a,b}'$ and the maps between the curves are given in the proof.
\end{prop}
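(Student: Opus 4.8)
The plan is to compute the quotient curves by the cyclic group of order five explicitly, using invariant theory for the $\alpha_5$-action, and then to descend the double-cover data along the quotient maps. First I would work out the ring of invariants of $k[x/y,z/y]$ under $\alpha_5\colon(x/y,z/y)\mapsto(\zeta x/y,\zeta^{-1}z/y)$. Setting $p=x/y$ and $q=z/y$, the monomials fixed by $\alpha_5$ are generated by $t:=pq$, $p^5$ and $q^5$, subject to $p^5q^5=t^5$; the affine equation of $H_{a,b}$ with $y=1$, namely $p^5-(b+2)p^2q^2-(a-b)pq+a+q^5=0$, becomes $p^5+q^5=(b+2)t^2+(a-b)t-a$ after substituting $t=pq$. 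So on the quotient the two invariant coordinates are $t=pq$ and $s:=p^5-q^5$ (an anti-invariant of $\tilde\iota$, which is what one wants to exhibit a hyperelliptic model), and the relation $(p^5+q^5)^2-(p^5-q^5)^2=4p^5q^5=4t^5$ together with $p^5+q^5=(b+2)t^2+(a-b)t-a$ yields
$$
s^2=\big((b+2)t^2+(a-b)t-a\big)^2-4t^5.
$$
The main calculational task is to verify that this sextic in $t$ factors as $(t-1)f_4(t)$ with $f_4$ exactly the stated degree-four polynomial; the root $t=1$ is forced because $(1:1:1)\in H_{a,b}$ is an $\alpha_5$-fixed point (Remark \ref{ab1}), so I would confirm the factorization by evaluating the sextic at $t=1$ and dividing. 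This gives the equation $s^2=(t-1)f_4(t)$ for $\overline H_{a,b}$; a direct genus count (the sextic is squarefree for $D(a,b)\neq 0$) shows the quotient has genus two, hence $H_{a,b}\to\overline H_{a,b}$ is \'etale by Hurwitz as claimed in Section \ref{qm5}.

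Next I would handle $\overline{\cH}_{a,b}=\cH_{a,b}/\tilde\alpha_5$. Since $\tilde\alpha_5$ fixes $w$ and the double cover is $w^2=1-xz/y^2=1-pq=1-t$, the function $w$ descends to the quotient, so $\overline{\cH}_{a,b}$ is the double cover of $\overline H_{a,b}$ cut out by $w^2=1-t$, i.e.\ $t=1-w^2$. Substituting into $s^2=(t-1)f_4(t)$ gives $s^2=-w^2f_4(1-w^2)$; absorbing $\sqrt{-1}$ (or rather writing $r=s/w$, using $t-1=-w^2$) one gets the curve $r^2=f_4(1-w^2)$ as stated. I would check that $f_4(1-w^2)$ has no repeated roots in $w$ under the smoothness hypothesis, so that $\overline{\cH}_{a,b}$ has genus three; then $\cH_{a,b}\to\overline{\cH}_{a,b}$ is \'etale of degree five as well. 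For the elliptic curve $E_{a,b}'$, the key point is that $\overline{\cH}_{a,b}\to\overline H_{a,b}$ has covering involution $w\mapsto-w$ which acts on $\overline{\cH}_{a,b}\colon r^2=f_4(1-w^2)$ by $(r,w)\mapsto(r,-w)$ — a second involution commuting with it. The Prym of $\overline{\cH}_{a,b}/\overline H_{a,b}$ is the quotient of $\overline{\cH}_{a,b}$ by... one sees that $\overline{\cH}_{a,b}$, as a genus three curve with a free involution to a genus two curve, also maps to the quotient by $w\mapsto-w$; setting $W=w^2$ this quotient is $r^2=f_4(1-W)$, an elliptic curve since $f_4(1-W)$ is a squarefree quartic in $W$. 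This is $E_{a,b}'$, and I would write its Weierstrass model by the standard transformation sending a quartic $r^2=g(W)$ with a rational point at infinity (or the completed-square normalisation) to $y^2=x^3+\ldots$, recording the explicit substitution.

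The last thing to pin down is that all the quotient maps are genuinely the ones in the diagram — i.e.\ that $\overline H_{a,b}\to E_{a,b}'$ does not exist as a map of curves but rather $E_{a,b}'$ sits under $\overline{\cH}_{a,b}$ — and that the two projections $\overline{\cH}_{a,b}\to\overline H_{a,b}$ and $\overline{\cH}_{a,b}\to E_{a,b}'$ together induce the isogeny $\mathrm{Jac}(\overline{\cH}_{a,b})\sim\mathrm{Jac}(\overline H_{a,b})\times E_{a,b}'$ realising $E_{a,b}'$ as the Prym; this is the standard bielliptic/Prym correspondence for a genus-three curve admitting two commuting involutions with quotients of genus $2$ and $1$. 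I expect the main obstacle to be purely computational: carrying out the substitution $t=pq$, $s=p^5-q^5$ cleanly and verifying the precise coefficients of $f_4$ — in particular tracking the factor $t-1$ and the signs introduced when passing through $t=1-w^2$ and $W=w^2$ — rather than any conceptual difficulty. I would organise the proof so that the invariant-coordinate computation is done once for $H_{a,b}$, and the $\cH_{a,b}$ statements follow formally by adjoining $w$ with $w^2=1-t$, which keeps the bookkeeping minimal.
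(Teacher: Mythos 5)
Your proposal is correct and follows essentially the same route as the paper's proof: the same $\alpha_5$-invariants ($t=xz/y^2$ together with fifth powers; your anti-invariant $s=p^5-q^5$ is just the paper's completed square up to a factor $2$, giving $s^2=Q(t)^2-4t^5$ with $Q(t)=(b+2)t^2+(a-b)t-a$), the same substitution $t=1-w^2$, $r=s/w$ for $\overline{\cH}_{a,b}$, and the same further quotient by $w\mapsto -w$ with the standard bielliptic Prym identification of $E'_{a,b}$, the Weierstrass model being the routine computation the paper records explicitly. Two minor points: $(1:1:1)$ is fixed by $\iota$, not by $\alpha_5$ --- the honest reason $t=1$ divides the sextic is the direct check $Q(1)^2=4=4\cdot 1^5$ (geometrically, the five points of $H_{a,b}$ where $w=0$ form a single $\alpha_5$-orbit lying over $t=1$) --- and no $\sqrt{-1}$ needs absorbing: completing the square gives $s^2=(1-t)f_4(t)$, so $r=s/w$ yields $r^2=f_4(1-w^2)$ exactly, the sign $(t-1)$ in the printed statement being a typo consistent with the rest of the proof.
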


\begin{proof}
We introduce rational functions $s,t$ on $H_{a,b}$ which are invariant under the automorphism $\alpha_5:(x:y:z)\mapsto (\zeta x:y:\zeta^{-1} z)$ of order five:
$$
s\,:=\,(x/y)^5,\quad t:=\,xz/y^2,\qquad\mbox{then} \quad t^5/s\,=\,(z/y)^5~. 
$$
Now we divide the defining polynomial for $H_{a,b}$ by $y^5$ 
and rewrite it with these invariant functions:
$$
(x/y)^5 - (b + 2)(xz/y^2)^2 - (a - b)xz/y^2 + a + (z/y)^5\,=\,
s-(b+2)t^2-(a-b)t+a+t^5/s~,
$$
Multiplying by $s$ we obtain a polynomial of degree two in $s$:
$$
s^2\,-\,\big((b+2)t^2+(a-b)t-a\big)s\,+\,t^5=0,
$$
which defines a genus two curve $\overline{H}_{a,b}$ which is thus the quotient of $H_{a,b}$ by $\alpha_5$.

Notice that $\overline{H}_{a,b}$ is a double cover of $\PP^1$, with coordinate $t$, branched in the six points where the discriminant is zero. 
It is also easy to find the Weierstrass form by substituting $s:=s+\big((b+2)t^2+(a-b)t-a\big)/2$, one finds
$
\overline{H}_{a,b}$: $s^2=(t-1)f_4(t),
$
with $f_4$ as above.

The function $1-xz/y^2\in k(H_{a,b})$, whose square root $w$ defines $\cH_{a,b}$,
can be written as
\begin{equation}\label{symbol}
w^2\,=\,1\,-\,xz/y^2\,=\,1\,-\,t\qquad \mbox{so}\quad t\,=\,1-w^2
\end{equation}
and substituting this in the Weierstrass equation of $\overline{H}_{a,b}$ and defining $r:=s/w$
we get the Weierstrass equation for the double cover ${\overline{\cH}}_{a,b}$
of $\overline{H}_{a,b}$:
$$
{\overline{\cH}}_{a,b}:\quad r^2\,=\,f_4(1-w^2)~. 
$$
In particular, ${\overline{\cH}}_{a,b}$ is hyperelliptic of genus three
and thus the map ${\overline{\cH}}_{a,b}\rightarrow \overline{H}_{a,b}$
is an etale double cover. The Prym variety of this double cover can be determined as
in \cite{Mpv}.
The discriminant of ${\overline{\cH}}_{a,b}$ is given by 
$$
2^{-8}(1-a-b)a^{10}\Delta(a,b).
$$ 
This curve has a subgroup isomorphic to $(\ZZ/2\ZZ)^2\subset {\rm Aut}({\overline{\cH}}_{a,b})$;
there is the hyperelliptic involution $(s',w)\mapsto (-s',w)$, the covering involution $(s',w)\mapsto (-s',-w)$ 
and their product is the involution $(s',w)\mapsto (s',-w)$ with quotient a genus one curve 
$$
C_{a,b}:\quad s'^2\,=\,f_4(1-w)~.
$$
By Proposition 1.2.1 of \cite{cornel}, 
one has the following cubic Weierstrass model which is a 
smooth birational model of $C_{a,b}$:
{\renewcommand{\arraystretch}{2.0}
\begin{equation}
\begin{array}{c}
E''_{a,b}:\quad y^2+a_1xy+a_3y\,=\,x^3+a_2x^2+a_4 x+a_6~,\\
 (x,y)=\Big(\ds\frac{2s'+(\frac{b^2}{4}+b-4)w+2}{w^2},
\frac{4s'+4a_2w^2+(\frac{b^2}{2}+2b-8)w+4}{w^3}\Big)~,
\end{array}
\end{equation}
}
with 
$$
a_1=\frac{b^2}{4}+b-4,\ a_3=\frac{(a+b)^2}{2}+4a+6b-8,\ 
a_2=2-a-\frac{ab}{2}-\frac{b^4}{64}-\frac{b^3}{8}-\frac{b^2}{4}-b,
$$
$$
a_4=4 ( a + b-1),\ a_6=(a + b-1)(8-4a-2ab-\frac{b^4}{16}-\frac{b^3}{2}-b^2-4b).
$$
The discriminant and the $j$-invariant of $E''_{a,b}$ are given by 
$-2^{-4}a^5\Delta(a,b)$ and 
$$
j(E''_{a,b})\,=\,
-\ds\frac{16(64 a^2 + 4 a^2 b + 16 a b^2 + a^2 b^2 + 2 a b^3 + b^4)^3}{a^5\Delta(a,b)}
$$ 
respectively. Notice that the factor $(a+b-1)$ disappears. 
This elliptic curve  is actually the Prym variety ${\rm Prym}({\overline{\cH}}_{a,b}/\overline{H}_{a,b})$.
\end{proof}

\begin{prop}\label{geoE}
If the threefold $X_{a,b}$ is smooth, 
the elliptic curve $E_{a,b}''$ is isogenous to the elliptic factor  $E_{a,b}$ of $J(X_{a,b})$
which we found in Proposition \ref{finedecij} (and also in Proposition \ref{general}).
\end{prop}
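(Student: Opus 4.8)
The plan is to identify both the elliptic factor $E_{a,b}$ and ${\rm Prym}(\overline{\cH}_{a,b}/\overline{H}_{a,b})$ with a single abelian subvariety of $J(\cH_{a,b})$, namely the $\tilde\alpha_5$-invariant part of the Prym of the double cover $\cH_{a,b}\to H_{a,b}$. Once the covers are lined up correctly, the proof is a short diagram chase with abelian varieties, the geometric content being supplied by Murre's construction and by Proposition \ref{elliptic}.

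First I would assemble a compatible square of \'etale covers. By the Prym description of the intermediate Jacobian recalled in Section 3 (following \cite{M}), $J(X_{a,b})={\rm Prym}(\cH_{a,b}/H_{a,b})$, and because the line $l$ is $D_5$-invariant this identification is $D_5$-equivariant, with $D_5=\langle\alpha_5,\iota\rangle$ acting on $J(X_{a,b})$ through $\alpha_X^*,\iota_X^*$ as in Propositions \ref{finedecij} and \ref{general}; moreover the covering involution $\iota_l$ commutes with the $D_5$-action on $\cH_{a,b}$ (Section \ref{autch}). Let $q\colon\cH_{a,b}\to\overline{\cH}_{a,b}=\cH_{a,b}/\tilde\alpha_5$ be the degree five quotient. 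Since $\iota_l$ commutes with $\tilde\alpha_5$ it descends to $\overline{\cH}_{a,b}$, and
$$
\overline{\cH}_{a,b}/\iota_l=\cH_{a,b}/\langle\tilde\alpha_5,\iota_l\rangle=(\cH_{a,b}/\iota_l)/\alpha_5=H_{a,b}/\alpha_5=\overline{H}_{a,b}.
$$
I would then check, against the explicit models of Proposition \ref{elliptic}, that this is the double cover appearing there: its covering involution, which on $r^2=f_4(1-w^2)$ reads $(r,w)\mapsto(-r,-w)$, is the one induced by $\iota_l$, because $\iota_l$ fixes $x/y$ and $z/y$, hence fixes $s=(x/y)^5$ and $t=xz/y^2$, and sends $w\mapsto -w$, so $r=s/w\mapsto -r$. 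Thus $q$ carries the \'etale double cover $\cH_{a,b}\to H_{a,b}$ onto the \'etale double cover $\overline{\cH}_{a,b}\to\overline{H}_{a,b}$.

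Next I would use the pullback $q^*\colon J(\overline{\cH}_{a,b})\to J(\cH_{a,b})$: it has finite kernel since $q$ is \'etale, its image lies in the $\tilde\alpha_5^*$-fixed part of $J(\cH_{a,b})$ since $q\circ\tilde\alpha_5=q$, and it intertwines the two covering involutions since $q\circ\iota_l=\iota_l\circ q$. Hence $q^*$ restricts to a map with finite kernel
$$
\overline{q}^*\colon\ {\rm Prym}(\overline{\cH}_{a,b}/\overline{H}_{a,b})\ \longrightarrow\ {\rm Prym}(\cH_{a,b}/H_{a,b})=J(X_{a,b}),
$$
whose image is a connected abelian subvariety of $J(X_{a,b})$ on which $\alpha_5^*$ acts trivially. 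By Proposition \ref{finedecij} the eigenvalue $1$ of $\alpha_5^*$ on $T_0J(X_{a,b})$ has multiplicity one, so the identity component of the fixed locus of $\alpha_5^*$ on $J(X_{a,b})$ is exactly the elliptic curve $E_{a,b}$; and since $\dim{\rm Prym}(\overline{\cH}_{a,b}/\overline{H}_{a,b})=g(\overline{\cH}_{a,b})-g(\overline{H}_{a,b})=3-2=1$, the image of $\overline{q}^*$ must be all of $E_{a,b}$, so $\overline{q}^*$ is an isogeny onto $E_{a,b}$. Therefore $E_{a,b}\sim{\rm Prym}(\overline{\cH}_{a,b}/\overline{H}_{a,b})$, and by Proposition \ref{elliptic} the right-hand side is the curve $E'_{a,b}$ with cubic Weierstrass model $E''_{a,b}$; hence $E_{a,b}\sim E''_{a,b}$. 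For $k\subset\C$ this is an isogeny over $k$; over an arbitrary field of characteristic zero one runs the identical argument with the algebraic model of $J(X)$ from Section \ref{aiJ}, the Prym construction being available away from characteristic two.

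The argument is soft, and I expect the only real point of care to be the matching of double covers in the second step: showing that the covering involution of $\overline{\cH}_{a,b}\to\overline{H}_{a,b}$ is induced by $\iota_l$ itself (and not, say, by $\iota_l$ composed with a power of $\alpha_5$), together with the $D_5$-equivariance of Murre's isomorphism, which is precisely where the choice of a $D_5$-invariant line $l$ enters. Equivalently, one may phrase the whole proof in terms of the two commuting idempotents $\frac{1}{5}\sum_{i=0}^{4}(\tilde\alpha_5^*)^i$ and $\frac{1}{2}(1-\iota_l^*)$ in ${\rm End}(J(\cH_{a,b}))\otimes\Q$, whose product --- applied in the two possible orders --- cuts out $E_{a,b}$ on the one hand and ${\rm Prym}(\overline{\cH}_{a,b}/\overline{H}_{a,b})$ on the other.
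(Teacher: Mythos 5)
Your argument for the generic case is essentially the paper's: the quotient diagram of Section \ref{qm5} exhibits ${\rm Prym}(\overline{\cH}_{a,b}/\overline{H}_{a,b})\sim E''_{a,b}$ as an abelian subvariety of ${\rm Prym}(\cH_{a,b}/H_{a,b})\cong J(X_{a,b})$ whose tangent space is the eigenvalue-$1$ eigenspace of $\tilde\alpha_5^*$, and the multiplicity-one statement of Proposition \ref{finedecij} then forces it to be $E_{a,b}$. Your elaboration via $q^*$ and the dimension count, and your care about which involution covers $\iota_l$, are fine.

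However, there is a genuine gap: your proof silently assumes that the Prym description $J(X_{a,b})\cong{\rm Prym}(\cH_{a,b}/H_{a,b})$ is available, and in the paper that description is only established when the quintic $H_{a,b}$ is smooth, i.e.\ when $D_1(a,b)=(a+b-1)D(a,b)\neq 0$. The proposition, by contrast, only assumes $X_{a,b}$ smooth, i.e.\ $D(a,b)\neq 0$. On the locus $a+b=1$ the threefold can still be smooth, but the discriminant quintic acquires a singularity at $(1{:}1{:}1)$ (the conic $K_T$ there is a double line, cf.\ Remark \ref{ab1}), so the \'etale double cover $\cH_{a,b}\to H_{a,b}$, the genus computations of Proposition \ref{elliptic}, and hence your entire diagram chase are not available there. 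Your argument therefore proves the statement only for $a+b\neq 1$. The paper spends the second half of its proof exactly on this missing stratum: it takes a line $Z$ in the parameter space through a point $(a_0,b_0)$ with $a_0+b_0=1$ (transverse to that locus), views $E''_{a,b}$ and $E_{a,b}$ as elliptic curves over the local ring $R$ of $Z$ at $(a_0,b_0)$, passes to N\'eron models, and uses ${\rm Hom}_{Q(R)}={\rm Hom}_R$ together with injectivity of the specialization map on ${\rm Hom}$ to transport the generic isogeny to the special fibre. Some such degeneration (or an alternative direct treatment of the nodal quintic and its Prym) is needed to reach the full strength of the proposition; as written, your proof does not cover it.
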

\begin{proof} In case $H_{a,b}$ is smooth, 
it follows from the diagram in Section \ref{qm5} that $E_{a,b}''$
is isogenous to an abelian subvariety of 
$J(X)={\rm Prym}(\cH_{a,b}/H_{a,b})$ 
and that
the tangent space in $0$ to this subvariety is the eigenspace of $\tilde{\alpha}_5^*$
acting on $T_0J(X)$ with eigenvalue $1$. 
Thus $E_{a,b}''$ is isogenous to $E_{a,b}$ if $(1-a-b)D(a,b)\neq 0$. 

What is left is the case $1-a-b=0$ and 
$D(a,b)=a\Delta(a,b)\neq 0$. 
Let $S$ be the affine open subscheme of $\mathbb{A}^2_{a,b}$ defined by $D(a,b)\not=0$. 
Let $P_0=(a_0,b_0)$ be a (geometric) point of $S$ with $1-a_0-b_0=0$. 
Take a line $Z$ on $S$ passing $P_0$ but different from the line $1-a-b=0$. 
Let $R$ be the localization of $\mathcal{O}_Z$ at $P_0$ and $Q(R)$ its field of fractions. 
We view $E_{a,b}'', E_{a,b}$ as smooth families over $Z$.  
Let us consider their base change to Spec$R$ and  
take the N\'eron models $\mathcal{E}_{a,b}'$, $\mathcal{E}_{a,b}$ over Spec$R$ respectively. 
What we have shown above is that ${\rm Hom}_{Q(R)}(E_{a,b}', E_{a,b})\neq 0$. As
${\rm Hom}_{Q(R)}(E_{a,b}', E_{a,b})={\rm Hom}_{R}(\mathcal{E}_{a,b}',\mathcal{E}_{a,b})$ 
(cf. p.12, Definition 1 and p.16, Corollary 2 of \cite{blr}) 
and the specialization map 
$$
{\rm Hom}_{R}(\mathcal{E}_{a,b}',\mathcal{E}_{a,b})\lra 
{\rm Hom}_{k}(\mathcal{E}_{a_0,b_0}',\mathcal{E}_{a_0,b_0})
$$ 
is injective (cf. \cite[p.45, Theorem 3.2]{l}), we find that $E''_{a_0,b_0}=\mathcal{E}_{a_0,b_0}'$ and 
$E_{a_0,b_0}=\mathcal{E}_{a_0,b_0}$ are isogenous. 
Hence we conclude that  $E_{a,b}''$ is isogenous to $E_{a,b}$ whenever
$X_{a,b}$ is smooth.
\end{proof}

\subsection{Quotient maps of degree two} \label{aq}
In the previous section we studied quotients of the curve $\cH_{a,b}$ by  the automorphism of order five. 
This curve also has the involution
$\tilde{\iota}$ (cf.\ Section \ref{autch}) which 
is a lift of the involution $\iota$ on $H_{a,b}$. 
Thus we have a commutative diagram:
$$
\begin{CD}
 \cH_{a,b} @> 2:1 >>   \mathcal{D}_{a,b}\,:=\,\cH_{a,b}/\tilde{\iota}\\
 @V 2:1 VV @V 2:1 VV \\
H_{a,b}  @>2:1 >>  D_{a,b}\,:=\, H_{a,b}/\iota~.
\end{CD}
$$

\begin{prop}
The curves $\mathcal{D}_{a,b}$ and $D_{a,b}$ have genus four and two respectively. 

The quotient curve $\mathcal{D}_{a,b}=\cH_{a,b}/\tilde{\iota}$ has a (singular) 
projective model in $\PP^2$ defined by 
\begin{equation}\label{cdab}
(T_1-2T_2)(T^2_1+T_1T_2-T_2^2)^2
\,+\,
((a+b+4)T_2^3-10T_1T_2^2+5T^3_1)w^2
\,+\,
((-2-b)T_2+5T_1)w^4=0.
\end{equation}
The genus two curve $D_{a,b}=H_{a,b}/\tilde{\iota}$ has the Weierstrass model
\begin{equation}
D_{a,b}: \quad v^2_2\,=\,5T^6_1+(4b+8)T^5_1+10(a-b)T^3_1-20aT_1+(a+b)^2+8a.
\end{equation}
The discriminant of $D_{a,b}$ is $$2^{12}5^5(a+b-1)^4\Delta(a,b).$$
\end{prop}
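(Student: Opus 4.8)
The plan is to argue entirely at the level of function fields, exactly as in the proof of Proposition \ref{elliptic}, but using the invariants of the involution in place of those of the cyclic automorphism. Dividing the quintic defining $H_{a,b}$ by $y^5$ and setting $p=x/y$, $q=z/y$ gives $k(H_{a,b})=k(p,q)$ with the relation
$$p^5+q^5-(b+2)p^2q^2-(a-b)pq+a=0,$$
on which $\iota$ acts by $p\leftrightarrow q$; likewise $k(\cH_{a,b})=k(p,q,w)$ with the extra relation $w^2=1-pq$ (this is $w^2=1-xz/y^2$), and $\tilde\iota$ again swaps $p,q$ and fixes $w$. Writing $\sigma_1=p+q$, $\sigma_2=pq$ one has $k(D_{a,b})=k(H_{a,b})^{\iota}=k(\sigma_1,\sigma_2)$ and $k(\mathcal{D}_{a,b})=k(\cH_{a,b})^{\tilde\iota}=k(\sigma_1,\sigma_2,w)$; since $\sigma_2=1-w^2$, the latter is just $k(\sigma_1,w)$.

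Next I would eliminate. By the Newton identity $p^5+q^5=\sigma_1^5-5\sigma_1^3\sigma_2+5\sigma_1\sigma_2^2$ the relation becomes a quadratic in $\sigma_2$,
$$(5\sigma_1-b-2)\sigma_2^2-(5\sigma_1^3+a-b)\sigma_2+(\sigma_1^5+a)=0.$$
Completing the square, with $T_1=\sigma_1$ and $v_2=2(5T_1-b-2)\sigma_2-(5T_1^3+a-b)$, gives $v_2^2=(5T_1^3+a-b)^2-4(5T_1-b-2)(T_1^5+a)$; expanding (and using $(a-b)^2+4a(b+2)=(a+b)^2+8a$) produces exactly the sextic in the statement, which is the Weierstrass model of $D_{a,b}$. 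For $\mathcal{D}_{a,b}$ I would instead substitute $\sigma_2=1-w^2$ into the same quadratic; collecting powers of $w$ and using $T_1^5-5T_1^3+5T_1-2=(T_1-2)(T_1^2+T_1-1)^2$ turns it into the $T_2=1$ specialisation of (\ref{cdab}), and homogenising in $T_2$ yields (\ref{cdab}) as a plane quintic in $\PP^2$ with coordinates $(T_1:T_2:w)$. One then checks that (\ref{cdab}) is irreducible and that $(T_1,w)$ maps $\mathcal{D}_{a,b}$ birationally onto it: the functions $\sigma_1,w$ generate $k(\mathcal{D}_{a,b})$, and conversely a point of (\ref{cdab}) determines $\sigma_2=1-w^2$ and hence the unordered pair $\{p,q\}$, so the map has degree one. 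As a consistency check, (\ref{cdab}) is a quadratic polynomial in $w^2$ whose discriminant is precisely the sextic defining $D_{a,b}$, which exhibits the degree-two cover $\mathcal{D}_{a,b}\to D_{a,b}$ of the diagram in Section \ref{aq} as $w\mapsto -w$.

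For the genera: $\mathcal{D}_{a,b}=\cH_{a,b}/\tilde\iota$ has genus $4$ by Lemma \ref{invcH}. For $D_{a,b}$, the involution $\iota$ fixes on $H_{a,b}$ exactly the point $(1:0:-1)$ together with the five points of $H_{a,b}\cap\{x=z\}$, which are six distinct points when $H_{a,b}$ is smooth, so the Hurwitz formula gives genus $2$ (equivalently, the displayed sextic is squarefree whenever its discriminant is nonzero). Finally, the discriminant of $D_{a,b}$ is obtained by a direct computation of the discriminant of that sextic; it factors as $2^{12}5^5(a+b-1)^4\Delta(a,b)$, where $\Delta(a,b)$ is the discriminant of the threefold and $(a+b-1)$ is exactly the extra factor by which the discriminant $D_1(a,b)$ of $H_{a,b}$ differs from $D(a,b)$ — the degeneracy recorded in Remark \ref{ab1}.

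The main obstacle is not a single idea but the bookkeeping: pushing the elimination and the completion of the square through with the right signs, and — the genuinely delicate point — confirming that (\ref{cdab}) is the irreducible relation cut out in $k[T_1,w]$ by $k(\mathcal{D}_{a,b})$, rather than a proper multiple of it. The discriminant identity is best verified with a computer algebra system. A secondary subtlety is the locus $a+b=1$: there $H_{a,b}$ is itself singular, so $\cH_{a,b}$ and the quotient curves must either be excluded from the genus statements or treated by a specialisation argument in the spirit of Proposition \ref{geoE}.
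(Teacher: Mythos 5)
Your proposal is correct and takes essentially the same route as the paper: eliminate via the $\iota$-invariant functions ($\sigma_1$ and $\sigma_2=1-w^2$, the paper's $T_1,T_2$), complete the square to obtain the sextic, get the genera from Lemma \ref{invcH} together with a Hurwitz/fixed-point count, and verify the discriminant by direct (computer) calculation. The only cosmetic differences are that the paper first writes down the plane quintic model of $\mathcal{D}_{a,b}$ and then sets $v=w^2$, counting fixed points of the covering involution $\tau$ on that singular model (rather than of $\iota$ on $H_{a,b}$), and it settles birationality of the quintic model by observing it has exactly two nodes, hence geometric genus $4$, matching the genus of $\mathcal{D}_{a,b}$.
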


\begin{proof}
The genus of the quotient curves is given in Lemma \ref{invcH}.
We introduce the following $\tilde{\iota}$-invariant rational functions on $\cH_{a,b}$: 
$$
T_1\,:=\,x+y,\quad T_2\,:=\,xz\,=\,y^2(1-w^2)~,
$$
where $w$ is as in equation (\ref{symbol}). 
If we dehomogenize by putting $y=1$, we get a rational map from $\cH_{a,b}$ to the singular quintic in the proposition. Since this quintic curve has two nodes in the points $w=T^2_1+T_1-1=0$, its genus is $\frac{(5-1)(5-2)}{2}-2=4$ and thus
it is birational to the quotient curve $\mathcal{D}_{a,b}$.

The covering involution $\cH_{a,b}\rightarrow H_{a,b}$ induces
the involution $\tau:(T_1,w)\lra (T_1,-w)$ on the curve $\mathcal{D}_{a,b}$.
The fixed points of $\tau$ in the singular model are $(0:0:1)$ and $(2:1:0)$
(in fact, the nodes are also fixed points, but as the two tangent lines in each node are interchanged, there are no fixed points on the smooth model mapping to the nodes).
Thus the genus of the quotient curve will be equal to two.

If we put $v:=w^2$, the quotient curve $D_{a,b}:=\mathcal{D}_{a,b}/\tau$ is given by 
$$
(T_1-2)(T^2_1+T_1-1)^2+(a+b+4-10T_1+5T^3_1)v+(-2-b+5T_1)v^2=0.
$$
To find a Weierstrass model of this genus two curve, 
we replace $v$ with $\ds\frac{v_1}{2+b-5T_1}$ and next replace $v_1$ by $\ds\frac{v_2}{2}+\frac{1}{2}(a+b+4-10T_1+5T^3_1)$ 
and we find the equation given in the proposition.
\end{proof}

\begin{prop} \label{geoB}
In the decomposition of the intermediate Jacobian $J(X_{a,b})$,
$$
J(X_{a,b})\,\stackrel{k}{\sim}\,E_{a,b}\,\times\,B_{a,b}^2
$$
(see Proposition \ref{finedecij}), the abelian surface 
$B_{a,b}$ is isogenous to the (two-dimensional) Prym variety of the double cover
$\cD_{a,b}\rightarrow D_{a,b}$,
$$
B_{a,b}\,\sim\,{\rm Prym}(\cD_{a,b}/D_{a,b})~.
$$
\end{prop}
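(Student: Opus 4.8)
The plan is to compare the two Prym varieties $\mathrm{Prym}(\cH_{a,b}/H_{a,b})$ and $\mathrm{Prym}(\cD_{a,b}/D_{a,b})$ by exploiting the full $D_5$-action on the tower of curves, exactly as in the proof of Proposition \ref{finedecij}. Recall from that proposition that $J(X_{a,b})=\mathrm{Prym}(\cH_{a,b}/H_{a,b})$ decomposes (up to isogeny) as $E_{a,b}\times B_+\times B_-$, where $B_\pm=\ker(\tilde\iota^*\pm[1])^0$ acting on the four-dimensional part $A=\mathrm{im}(\tilde\alpha_5^*-[1])$, and $B_{a,b}=B_+\sim B_-$. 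The key point is that the involution $\tilde\iota$ on $\cH_{a,b}$ (identified in Lemma \ref{invcH} with the involution induced by $\iota_X$) commutes with the covering involution $\iota_l$, so $\tilde\iota$ descends to the Prym variety, and the decomposition above is precisely the $\pm1$-eigenspace decomposition of $\tilde\iota^*$ on $A$.

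First I would set up the pull-back maps. The quotient map $\cH_{a,b}\to\cD_{a,b}=\cH_{a,b}/\tilde\iota$ induces $\mathrm{Jac}(\cD_{a,b})\hookrightarrow\mathrm{Jac}(\cH_{a,b})$ (up to isogeny) onto the $(+1)$-eigenspace of $\tilde\iota^*$, and similarly $\mathrm{Jac}(D_{a,b})\hookrightarrow\mathrm{Jac}(H_{a,b})$. Since $\iota_l$ commutes with $\tilde\iota$, the covering involution of $\cH_{a,b}/H_{a,b}$ restricts to the covering involution of $\cD_{a,b}/D_{a,b}$, and one obtains a map of Prym varieties $\mathrm{Prym}(\cD_{a,b}/D_{a,b})\to\mathrm{Prym}(\cH_{a,b}/H_{a,b})=J(X_{a,b})$, again up to isogeny, whose image is the $(+1)$-eigenspace of $\tilde\iota^*$ on $J(X_{a,b})$. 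Next I would show that this $(+1)$-eigenspace is isogenous to $E_{a,b}\times B_+$. Indeed, $\tilde\iota^*$ acts trivially on the elliptic factor $E_{a,b}$ (this can be read off either from the diagram in Section \ref{qm5}, since $\tilde\iota$ commutes with $\tilde\alpha_5$ and fixes the $\tilde\alpha_5^*$-eigenvalue-$1$ line, or directly from the fact that $\iota$ fixes the function $t=xz/y^2$ hence acts on $\overline H_{a,b}$); and on $A$ the $(+1)$-eigenspace of $\tilde\iota^*$ is by definition $B_+=B_{a,b}$. Therefore $\mathrm{Prym}(\cD_{a,b}/D_{a,b})\sim E_{a,b}\times B_{a,b}$.

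To isolate $B_{a,b}$ by itself I would then remove the elliptic factor. The involution $\tilde\iota$ on $\cH_{a,b}$ and the order-five automorphism $\tilde\alpha_5$ together generate the $D_5$ acting on $\cD_{a,b}$ through its quotient $\mathbb Z/5$ (since $\tilde\iota$ already divided out), and the elliptic factor $E_{a,b}$ inside $\mathrm{Prym}(\cD_{a,b}/D_{a,b})$ is exactly the part where this residual $\mathbb Z/5$ acts trivially, while $B_{a,b}$ is the part on which it acts through primitive fifth roots of unity. Concretely, pushing forward along $\cD_{a,b}\to\cD_{a,b}/(\text{residual }\mathbb Z/5)$ and comparing with $\overline\cH_{a,b}\to E'_{a,b}$ from Proposition \ref{elliptic}, one sees the $\mathbb Z/5$-invariant part of $\mathrm{Prym}(\cD_{a,b}/D_{a,b})$ is $E'_{a,b}\sim E_{a,b}$ (Proposition \ref{geoE}), and the complementary abelian surface is $B_{a,b}$. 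Hence $B_{a,b}\sim\mathrm{Prym}(\cD_{a,b}/D_{a,b})$ modulo the already-accounted-for elliptic curve, and since in the statement "isogenous" allows absorbing such a factor only if it is actually absent, I would instead phrase the final identification as: $\mathrm{Prym}(\cD_{a,b}/D_{a,b})$ is two-dimensional (by the genus count $g(\cD_{a,b})=4$, $g(D_{a,b})=2$, so the Prym has dimension $4-2=2$), so it cannot contain $E_{a,b}$ as a factor, and therefore the map $\mathrm{Prym}(\cD_{a,b}/D_{a,b})\to J(X_{a,b})$ constructed above lands in $B_+=B_{a,b}$ and is an isogeny onto it by dimension count.

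The main obstacle I anticipate is the last dimension bookkeeping: one must be careful that the map $\mathrm{Prym}(\cD_{a,b}/D_{a,b})\to J(X_{a,b})$ really has image inside the two-dimensional factor $B_{a,b}$ and is an isogeny onto it, rather than, say, mapping partially into $E_{a,b}$ and partially into $B_{a,b}$. The clean way to settle this is the tangent-space computation: identify $T_0\mathrm{Prym}(\cD_{a,b}/D_{a,b})$ inside $T_0 J(X_{a,b})=H^{1,2}(X)^{\iota_l=-1}$, check via the explicit eigenvector basis $v_j=x_j\Omega/F^2$ of Section \ref{aiJ} that it is exactly the $(+1)$-eigenspace of $\tilde\iota^*$ intersected with the sum of the $\zeta^{\pm1},\zeta^{\pm2}$-eigenspaces of $\tilde\alpha_5^*$ (i.e. $T_0 B_{a,b}$), using $\tilde\iota\tilde\alpha_5\tilde\iota^{-1}=\tilde\alpha_5^{-1}$. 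Once the tangent spaces agree, the map is an isogeny and the proposition follows; the genus computations $g(\cD_{a,b})=4$, $g(D_{a,b})=2$ from the preceding proposition guarantee the target and source have the same dimension two.
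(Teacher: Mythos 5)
Your overall skeleton is the paper's: pull back along the diagram of Section \ref{aq}, identify the image of ${\rm Prym}(\cD_{a,b}/D_{a,b})$ with the connected $\tilde{\iota}^*$-invariant part of ${\rm Prym}(\cH_{a,b}/H_{a,b})\cong J(X_{a,b})$, and locate that part in the decomposition of Proposition \ref{finedecij} using the genus counts $g(\cD_{a,b})=4$, $g(D_{a,b})=2$. However, the middle of your argument contains a genuine error. You assert that $\tilde{\iota}^*$ acts trivially on the elliptic factor $E_{a,b}$, so that the $(+1)$-eigenspace of $\tilde{\iota}^*$ on $J(X_{a,b})$ would be $E_{a,b}\times B_+$; both the claim and its justifications are false. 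First, $\tilde{\iota}$ does not commute with $\tilde{\alpha}_5$ (they satisfy $\tilde{\iota}\tilde{\alpha}_5\tilde{\iota}^{-1}=\tilde{\alpha}_5^{-1}$, which you yourself use later); this relation only shows that $\tilde{\iota}^*$ preserves the $\tilde{\alpha}_5^*$-eigenvalue-$1$ line, not that it acts there as $+1$. In fact it acts as $-1$: since $\tilde{\iota}$ normalizes $\langle\tilde{\alpha}_5\rangle$ it descends to $\overline{\cH}_{a,b}=\cH_{a,b}/\tilde{\alpha}_5$, where it fixes $w$ and sends $s\mapsto t^5/s$, i.e.\ it is the hyperelliptic involution of $r^2=f_4(1-w^2)$; hence it acts as $-1$ on ${\rm Jac}(\overline{\cH}_{a,b})$ and in particular on $E_{a,b}\sim{\rm Prym}(\overline{\cH}_{a,b}/\overline{H}_{a,b})$. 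Your claimed three-dimensional invariant part also contradicts your own observation that the image of the two-dimensional ${\rm Prym}(\cD_{a,b}/D_{a,b})$ is exactly that invariant part. Likewise there is no ``residual $\Z/5$'' acting on $\cD_{a,b}=\cH_{a,b}/\tilde{\iota}$: the subgroup $\langle\tilde{\iota}\rangle$ is not normal in $D_5$ and $D_5$ has no quotient of order five, so the step in which you try to split an elliptic factor off ${\rm Prym}(\cD_{a,b}/D_{a,b})$ is vacuous --- there is nothing to split off.

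The repair is exactly your final fallback, which is also the paper's (terse) proof: the image of ${\rm Prym}(\cD_{a,b}/D_{a,b})$ is the connected $\tilde{\iota}^*$-invariant part of ${\rm Prym}(\cH_{a,b}/H_{a,b})$, of dimension $g(\cD_{a,b})-g(D_{a,b})=2$; since by the proof of Proposition \ref{finedecij} the involution has a two-dimensional $(+1)$-eigenspace on $T_0A$ and acts by $\pm1$ on $T_0E_{a,b}$, a two-dimensional invariant part forces the eigenvalue $-1$ on $E_{a,b}$ and identifies the invariant part with one of the two mutually isogenous surfaces $B_\pm\sim B_{a,b}$. (The sign ambiguity of whether $\iota_X$ induces $\tilde{\iota}$ or $\tilde{\iota}\iota_l$ on $\cH_{a,b}$, which changes $\tilde{\iota}^*$ by $-1$ on the Prym, is harmless precisely because $B_+\sim B_-$.) So your conclusion and your closing dimension/tangent-space argument are sound, but as written the proposal affirms a false statement ($\tilde{\iota}^*|_{E_{a,b}}=+1$) and invokes a nonexistent group action; that paragraph should be deleted, not patched.
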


\begin{proof}
From the diagram in Section \ref{aq} it follows that 
${\rm Prym}(\cD_{a,b}/D_{a,b})$ is isogenous to an abelian subvariety of $J(X_{a,b})
\cong{\rm Prym}(\cH_{a,b}/H_{a,b})$, 
and from the proof of Proposition \ref{finedecij} one finds that it
must be isogenous to $B_{a,b}$.
\end{proof}

\

\subsection{The Jacobian of a genus two curve} 
We now try to find  the Jacobian of a genus two curve
which is isogenous to the abelian surface $B_{a,b}$.
To do this we first consider, for general $a,b$, the normalization of the $4:1$ cover 
$$
\mathcal{D}_{a,b}\lra D_{a,b}\lra \mathbb{P}^1,
\qquad (T_1,w)\,\longmapsto\,(T_1,v_2)\,\longmapsto\,T_1
$$ 
where we put $T_2=1$.
So we view $k(\cD_{a,b})=k(T_1,w)$ as a quartic extension of $k(T_1)$ and we consider its Galois closure.

\

\begin{prop}\label{nor}
Assume that the map $\mathcal{D}_{a,b}\lra \mathbb{P}^1$ is not a Galois cover,
and let
$\widetilde{\mathcal{D}}_{a,b}\lra \mathbb{P}^1$ 
be its (Galois) normalization.  
Then the double cover 
$\widetilde{\mathcal{D}}_{a,b}\lra\mathcal{D}_{a,b}$ is defined by the
function field extension 
$$
k(\mathcal{\widetilde{D}}_{a,b})\,=\,k(\mathcal{D}_{a,b})({t}),
\qquad t^2=\ds\frac{T_1-2}{5T_1-b-2}~,
$$
where $k(\mathcal{D}_{a,b})$ is the function field of $\mathcal{D}_{a,b}$. 
\end{prop}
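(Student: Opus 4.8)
The plan is to read the minimal polynomial of the function $w$ over $k(T_1)$ directly off the model (\ref{cdab}) of $\mathcal{D}_{a,b}$, observe that it is \emph{biquadratic}, and then invoke the elementary Galois theory of biquadratic extensions. Setting $T_2=1$ in (\ref{cdab}), the function $w\in k(\mathcal{D}_{a,b})$ satisfies
$$
P(w)\;:=\;(5T_1-b-2)\,w^4\,+\,(5T_1^3-10T_1+a+b+4)\,w^2\,+\,(T_1-2)(T_1^2+T_1-1)^2\;=\;0.
$$
In the tower $k(T_1)\subset k(D_{a,b})\subset k(\mathcal{D}_{a,b})$ the first extension is the hyperelliptic map of $D_{a,b}$ and the second is the double cover $\mathcal{D}_{a,b}\to D_{a,b}$ of the diagram in Section \ref{aq} (note $k(D_{a,b})=k(T_1,w^2)$); both steps have degree two, so $[k(\mathcal{D}_{a,b}):k(T_1)]=4$ and $P$ is, up to multiplication by the nonzero element $5T_1-b-2$ of $k(T_1)$, the minimal polynomial of $w$ over $k(T_1)$, in particular irreducible. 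Writing the monic form as $w^4+pw^2+q$ one has
$$
q\;=\;\frac{(T_1-2)(T_1^2+T_1-1)^2}{5T_1-b-2}\;=\;c\,g^2,\qquad c:=\frac{T_1-2}{5T_1-b-2},\quad g:=T_1^2+T_1-1\in k(T_1)^{\times}.
$$

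Next I would compute the splitting field $L$ of $P$ over $k(T_1)$. Its roots are $\pm w_1,\pm w_2$ with $w_1:=w$; since $P$ is irreducible, $w_1^2\neq w_2^2$ (otherwise $P=(w^2-w_1^2)^2$ would be reducible), so $\pm w_1,\pm w_2$ are four distinct roots, and reading the constant term of the monic quartic gives $(w_1w_2)^2=q$. Hence $w_2=(w_1w_2)/w_1\in k(\mathcal{D}_{a,b})(\sqrt q)$, while conversely $w_1w_2\in L$, so
$$
L\;=\;k(T_1)(w_1,w_2)\;=\;k(\mathcal{D}_{a,b})(\sqrt q)\;=\;k(\mathcal{D}_{a,b})\big(\sqrt c\,\big),
$$
the last equality because $\sqrt q=g\sqrt c$ with $g\in k(T_1)\subset k(\mathcal{D}_{a,b})$. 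Thus $L=k(\mathcal{D}_{a,b})(t)$ with $t^2=c=\tfrac{T_1-2}{5T_1-b-2}$, and since $\widetilde{\mathcal{D}}_{a,b}$ is by definition the smooth projective curve whose function field is the Galois closure of $k(\mathcal{D}_{a,b})/k(T_1)$ — that is, the splitting field $L$ of $P$ — this exhibits $k(\widetilde{\mathcal{D}}_{a,b})=k(\mathcal{D}_{a,b})(t)$, as asserted.

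Finally I would check that this is a genuine quadratic extension. If $\sqrt c\in k(\mathcal{D}_{a,b})$ then $L=k(\mathcal{D}_{a,b})$ would itself be the splitting field of $P$, hence $\mathcal{D}_{a,b}\to\mathbb{P}^1$ would be a Galois cover, contrary to hypothesis; therefore $\sqrt c\notin k(\mathcal{D}_{a,b})$ and $[L:k(\mathcal{D}_{a,b})]=2$. (Equivalently: the non-Galois hypothesis forces the Galois group of $P$ to be the dihedral group of order $8$, the only non-abelian possibility for an irreducible biquadratic, and then $\widetilde{\mathcal{D}}_{a,b}\to\mathcal{D}_{a,b}$ is forced to have degree $8/4=2$.) The only point requiring any care — and the main obstacle — is the bookkeeping that $P$ genuinely has degree four over $k(T_1)$ with $\pm w_1,\pm w_2$ its four distinct roots, so that the computation of $L$ above is legitimate; but this is exactly what the degrees in the tower $k(T_1)\subset k(D_{a,b})\subset k(\mathcal{D}_{a,b})$ from Section \ref{aq}, together with the irreducibility of $P$, supply. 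Everything else is formal, once the factorization $(T_1-2)(T_1^2+T_1-1)^2$ of the constant term of (\ref{cdab}) has been noticed.
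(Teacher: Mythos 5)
Your argument is correct and follows essentially the same route as the paper's proof: read the equation of $\mathcal{D}_{a,b}$ as a biquadratic $pX^4+qX^2+r$ over $k(T_1)$, observe that the second pair of roots is $\pm\sqrt{r/(pw^2)}$, and note that adjoining this square root amounts to adjoining $t$ with $t^2=(T_1-2)/(5T_1-b-2)$ because $(T_1^2+T_1-1)/w$ already lies in $k(\mathcal{D}_{a,b})$. The extra bookkeeping you supply (irreducibility via the degree-$4$ tower and the degree-$2$ check from the non-Galois hypothesis) is sound and merely makes explicit what the paper leaves implicit.
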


\begin{proof}
The equation of $\mathcal{D}_{a,b}$ can be written as $pw^4+qw^2+r$, with $p,q,r$ polynomials
in $T_1$. 
So $T_1,w\in k(\cD_{a,b})$, and $w$ is a root of the irreducible polynomial $pX^4+qX^2+r\in k(T_1)[X]$. Another root is obviously $-w$, and in 
$k(T_1,w)[X]=k(\cD_{a,b})[X]$ one finds the factorization:
$$
pX^4+qX^2+r\,=\,p(X^2\,-\,w^2)(X^2\,-\,s)~,\qquad 
s\,=\,\frac{r}{pw^2}\,=\,\frac{(T_1-2)(T_1^2+T_1-1)^2}{(5T_1-b-2)w^2}~.
$$
Thus in general we need to adjoint the square root of $s$, or equivalently $t$ as in the proposition, to obtain a Galois extension.
\end{proof}


\subsection{The Galois group}
For $a,b$ such that the 4:1 map $\mathcal{D}_{a,b}\rightarrow \PP^1$ is
not Galois (this would imply $b\not=8$), 
the Galois cover $\widetilde{\cD}_{a,b} \rightarrow \PP^1$
has group
$\mbox{Gal}(k(\tcD_{a,b})/k(T_1))\cong D_4$, the dihedral group of order eight.
Since the roots of the polynomial $pX^4+qX^2+r$ are $\pm w$ and 
$\pm w_1=\pm t(T_1^2+T_1-1)/w$,
this Galois group is generated by $\sigma$, 
which acts on the roots $w,w_1$ as $w_1,-w$ and 
the involution $\tau$ on $\widetilde{\cD}_{a,b}$,
which sends $t\mapsto -t$ and thus maps $w,w_1$ to $w,-w_1$.

In the following proposition we establish and use the following diagram, 
where each arrow is a double cover:
$$
\begin{array}{ccccl}
&&\widetilde{\mathcal{D}}_{a,b} &&\\
&\swarrow&&\searrow&\\
{\mathcal{D}}_{a,b}&&&&C_{a,b}:=\widetilde{\mathcal{D}}_{a,b}/\sigma\tau\\
\downarrow&&&&\,\downarrow\\
D_{a,b}&&&&
C_0:=\widetilde{\mathcal{D}}_{a,b}/\langle\sigma\tau,\sigma^2\rangle\cong\PP^1\\
&\searrow&&\swarrow&\\
&&\PP^1&&
\end{array}
$$

\begin{prop}\label{pan}Assume that the 4:1 map 
$\mathcal{D}_{a,b}\rightarrow \PP^1$ is not Galois.
Then the abelian surface $B_{a,b}$ in $J(X_{a,b})$, 
which is isogenous to ${\rm Prym}(\cD_{a,b}/D_{a,b})$, is also isogenous
to the Jacobian of the genus two curve $C_{a,b}:=\widetilde{\mathcal{D}}_{a,b}/\sigma\tau$. 
More precisely, ${\rm Prym}(\cD_{a,b}/D_{a,b})$ is isomorphic, 
as a principally polarized abelian variety, to ${\rm Jac}(C_{a,b})$.

The curve $C_{a,b}$ is defined by the Weierstrass equation
\begin{equation}\label{genus2model}
C_{a,b}\,:\quad s^2\,=\,-(b-8)f_6(t),\qquad f_6(t)\,:=\,\sum_{i=0}^6b_it^i,\quad\mbox{with}
\end{equation}
$$
\begin{array}{rcl}
b_0&=&-(a + b+24),\\
b_1&=&10(b-8), \\
b_2&=&5(3a + 13b - 8), \\
b_3&=&-10(b-8)(b+2),\\
b_4&=&-5(15a + 6b^2 + 19b - 56),\\
b_5&=&2(b-8)(b^2+9b-11),\\  
b_6&=&5(25a + b^3 + 6b^2 - 13b + 8)~.
\end{array}
$$
The discriminant of the curve $C_{a,b}$ is given by 
$$
2^{6}5^5(b-8)^{22}\Delta(a,b)^2~.
$$
\end{prop}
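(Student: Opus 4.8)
The plan is to establish the isomorphism ${\rm Prym}(\cD_{a,b}/D_{a,b})\cong{\rm Jac}(C_{a,b})$ by a standard bigonal/trigonal-type correspondence argument applied to the $D_4$-cover $\widetilde{\cD}_{a,b}\to\PP^1$, and then to pin down the Weierstrass model of $C_{a,b}$ and its discriminant by explicit computation in the function field $k(\widetilde{\cD}_{a,b})$. First I would set up notation for the $D_4$-action: with $\sigma$ and $\tau$ as above, the subgroup lattice of $D_4$ gives, besides $\cD_{a,b}=\widetilde{\cD}_{a,b}/\langle\tau\rangle$ and $D_{a,b}=\widetilde{\cD}_{a,b}/\langle\tau,\sigma^2\rangle$, also the curves $C_{a,b}=\widetilde{\cD}_{a,b}/\langle\sigma\tau\rangle$ and $C_0=\widetilde{\cD}_{a,b}/\langle\sigma\tau,\sigma^2\rangle$. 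Since $\langle\sigma\tau,\sigma^2\rangle\cong(\Z/2)^2$ is conjugate in $D_4$ to $\langle\tau,\sigma^2\rangle$, an automorphism of $D_4$ swaps the two "$D$-towers", and in particular $C_0\cong\PP^1$ (one checks directly that $\widetilde{\cD}_{a,b}\to C_0$ realizes $C_0$ as a rational curve, since $C_0$ is dominated by $D_{a,b}$ and by $C_{a,b}$ in a way that forces genus $0$; alternatively the genus count in the next step forces it). This gives the square diagram in the statement with all four edges double covers.

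The central claim is then the classical fact (Mumford's "bigonal construction", or the Donagi–Recillas trick for a $(\Z/2)^2$-quotient) that for such a diagram of double covers with $\widetilde{\cD}_{a,b}/\langle\sigma^2\rangle$-bottom $C_0\cong\PP^1$, the Prym variety ${\rm Prym}(\cD_{a,b}/D_{a,b})$ is isomorphic as a principally polarized abelian variety to the Jacobian of the complementary curve $C_{a,b}$, provided $C_{a,b}$ has genus $2$ and the cover $\cD_{a,b}\to D_{a,b}$ is of the relevant type (here $\cD_{a,b}\to D_{a,b}$ is a double cover of a genus two curve by a genus four curve, branched along the ramification points one can read off Lemma \ref{invcH}). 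I would verify the hypotheses of that construction: compute the ramification of $\widetilde{\cD}_{a,b}\to\PP^1$ from the quartic $pX^4+qX^2+r$ together with the extra square root $t^2=(T_1-2)/(5T_1-b-2)$, apply Riemann–Hurwitz to each of the four double covers, and confirm $g(C_{a,b})=2$, $g(C_0)=0$, $g(\cD_{a,b})=4$, $g(D_{a,b})=2$, consistently with the genera already recorded. Once the hypotheses hold, the isomorphism of p.p.a.v.'s follows formally, and combined with Proposition \ref{geoB} we get $B_{a,b}\sim{\rm Jac}(C_{a,b})$.

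It remains to produce the explicit Weierstrass equation \eqref{genus2model}. For this I would work concretely: $k(C_{a,b})=k(\widetilde{\cD}_{a,b})^{\sigma\tau}$, and since $\sigma\tau$ has order two it suffices to exhibit two $\sigma\tau$-invariant rational functions on $\widetilde{\cD}_{a,b}$ generating a field of transcendence degree one with the right ramification, together with the relation they satisfy. Using the description of $\sigma$ and $\tau$ on the roots $\pm w,\pm w_1$ (with $w_1=t(T_1^2+T_1-1)/w$), one forms the $\sigma\tau$-invariants — e.g. $t+$(something in $w w_1$) and a suitable symmetric combination — pushes down the relation $pw^4+qw^2+r=0$ and $t^2=(T_1-2)/(5T_1-b-2)$, and eliminates to land on a sextic model $s^2=-(b-8)f_6(t)$. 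The coefficients $b_0,\dots,b_6$ and the discriminant $2^6 5^5(b-8)^{22}\Delta(a,b)^2$ are then just the output of this elimination (e.g. a resultant computation) plus the standard genus-two discriminant formula; the factor $(b-8)^{22}$ reflects the exclusion of the Galois case $b=8$ and the twist by $-(b-8)$, while $\Delta(a,b)^2$ appears because ${\rm Jac}(C_{a,b})\sim B_{a,b}$ degenerates exactly where $X_{a,b}$ does.

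The main obstacle is the bigonal-construction step: one must check carefully that the double cover $\cD_{a,b}\to D_{a,b}$ is \emph{unramified in the right sense} and that no correction terms (arising from shared branch points of the two towers over $C_0\cong\PP^1$) spoil the isomorphism of principally polarized abelian varieties, so that one really gets an isomorphism and not merely an isogeny. Everything after that — the Riemann–Hurwitz bookkeeping and the passage from the function-field relations to the explicit $f_6$ and its discriminant — is a mechanical, if lengthy, computation best left to a computer algebra system.
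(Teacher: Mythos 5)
Your plan follows the paper's own route: the same $D_4$ subgroup lattice and square of double covers, the same identification $k(C_0)=k(t)$, the same elimination with the $\sigma\tau$-invariants $w+w_1$, $ww_1$ (via $q/p=2ww_1-(w+w_1)^2$, $r/p=(ww_1)^2$, $T_1=((b+2)t^2-2)/(5t^2-1)$ and the substitution $s=(b-8)(5t^2-1)(w+w_1)$), and the same appeal to the bigonal construction for the Prym statement. The one point you leave open --- whether the construction really yields an isomorphism of principally polarized abelian varieties rather than an isogeny, and whether ramification of $\cD_{a,b}\rightarrow D_{a,b}$ (which is indeed ramified, at the two fixed points of $\tau$) introduces ``correction terms'' --- is exactly what the paper settles, and it does so without any ramification bookkeeping: by Pantazis \cite[Proposition 3.1]{Pa}, for such a $D_4$-diagram the two Pryms ${\rm Prym}(\cD_{a,b}/D_{a,b})$ and ${\rm Prym}(C_{a,b}/C_0)={\rm Jac}(C_{a,b})$ are \emph{dual} abelian varieties, and since ${\rm Jac}(C_{a,b})$ is principally polarized it is self-dual, which gives the asserted isomorphism. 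So the missing ingredient in your write-up is not a hypothesis check but the precise form of the theorem (duality of the two Pryms) together with the self-duality observation; invoking it in that form makes your ``main obstacle'' disappear. Two small corrections: $C_0$ is \emph{not} dominated by $D_{a,b}$ (the two Klein four-subgroups $\langle\tau,\sigma^2\rangle$ and $\langle\sigma\tau,\sigma^2\rangle$ are not nested, and being dominated by a curve would not force genus $0$ anyway); the clean argument, as in the paper, is that $t=ww_1/(T_1^2+T_1-1)$ is fixed by both $\sigma\tau$ and $\sigma^2$, so $k(C_0)=k(t)$ and $C_0\cong\PP^1$. Beyond that, your Riemann--Hurwitz checks and the resultant computation for $f_6$ and its discriminant are the same mechanical steps the paper performs.
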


\begin{proof}
The involutions $\sigma\tau$ and $\sigma^2$ act as $(w,w_1)\mapsto (w_1,w)$ and $(w,w_1)\mapsto (-w_1,-w)$ respectively. In particular, both fix 
$t=ww_1/(T_1^2+T_1-1)$. Thus the function field of 
$C_0:=\widetilde{\mathcal{D}}_{a,b}/\langle\sigma\tau,\sigma^2\rangle$, which is a quadratic extension of $k(T_1)$, is the field $k(t)$ and hence $C_0\cong\PP^1$.

The function field of $C_{a,b}:=\widetilde{\mathcal{D}}_{a,b}/\sigma\tau$
is the extension of $k(T_1)$ generated by $w+w_1$ and $ww_1$ and is a quadratic extension of $k(t)$. From the factorization
$$
\begin{array}{rcl}
pX^4+qX^2+r&=&p(X\,-\,w)(X\,+\,w)(X\,-\,w_1)(X\,+\,w_1)\\
&=&p(X^2-(w+w_1)X+ww_1)(X^2+(w+w_1)X+ww_1)
\end{array}
$$
we find that $q/p=2ww_1-(w+w_1)^2$ and $r/p=(ww_1)^2$. Therefore
$$
(w+w_1)^2\,=\,2ww_1\,-\,q/p\,=\,2t(T_1^2+T_1-1)\,-\,
(5T_1^3-10T_1+(a+b+4))/(5T_1-(b+2))~.
$$
From the definition of $t$ we also have 
$T_1={((b+2)t^2-2)}/{(5t^2-1)}$, 
thus we can write $(w+w_1)^2$ as a rational function in $t$. 
Defining $s:=(b-8)(5t^2-1)(w+w_1)$ clears the denominator and we obtain the 
equation for $C_{a,b}$ as in the proposition.

For such a diagram defined by a $D_4$-cover, a general result of Pantazis \cite[Proposition 3.1]{Pa}
asserts that ${\rm Prym}(\cD_{a,b}/D_{a,b})$ and 
${\rm Prym}(C_{a,b}/C_0)={\rm Jac}(C_{a,b})$ are dual abelian varieties. 
As ${\rm Jac}(C_{a,b})$ is principally polarized, it is self dual, and the proposition is proved. 
\end{proof}

\subsection{The case $b=8$}
In this case, the extension $k(\mathcal{D}_{a,b})/k(T_1)$ is already normal,
see Proposition \ref{nor}. 
The affine model
$$
\mathcal{D}_{a,8}:\quad (T_1-2)(T^2_1+T_1-1)^2+(a+12-10T_1+5T^3_1)w^2+5(T_1-2)w^4=0
$$ 
admits the following two new involutions 
$$
\iota^{\pm}_8:\quad (T_1,w)\,\longmapsto\, 
\left(T_1,\,\pm\frac{T^2_1+T_1-1}{\sqrt{5}w}\right)~.
$$
Put 
$$
(X,Y)\,=\,
\Big(\frac{-1}{T_1-2},\,\frac{1}{T_1-2}(\sqrt{5}w\pm \frac{T^2_1+T_1-1}{w})\Big)~.
$$
The quotient curves $C^{\pm}_a=\mathcal{D}_{a,8}/\iota^{\pm}_8$ 
are elliptic curves with the following affine models:
$$C^{\pm}_{a}:\quad
Y^2=(a+32)X^3+10(-5\pm \sqrt{5})X^2-10(-3\pm\sqrt{5})X-5\pm2\sqrt{5}$$
with the discriminants $-5(2\mp\sqrt{5})^2(a+32)(27a+64)$ and 
the $j$-invariants (independent of the choice of sign): 
$$
j(C^{\pm}_{a})\,=\,\ds\frac{2^{14}\cdot 5^2(3a-4)^3}{(a+32)^3(27a+64)}~.
$$ 
Since we have assumed that $k$ contains $\zeta$, it is easy to see that the curves 
$C^{\pm}_{a}$ are isomorphic over $k(\sqrt{-1})$ to the elliptic curve with the  
following affine model 
$$
C_a:5y^2=  5 (a+32) x^3 - 100 x^2+ 20 x -1,\qquad 
(x,y)\,=\,\left(\frac{5\pm \sqrt{5}}{10}X,\,\frac{\sqrt{5\pm 2\sqrt{5}}}{5}Y\right)~,
$$
where  $\sqrt{5+2\sqrt{5}}=\sqrt{-1}(1+2\zeta^3+2\zeta^4)$ and $\sqrt{5-2\sqrt{5}}=-\sqrt{-1}(1+2\zeta+2\zeta^3)$. 
Note that $D(a,8)=a (a+32)^2 (27 a+64)$. 
Hence $C_a\simeq C^{\pm}_a$ is smooth provided if $X_{a,8}$ is smooth. 
 
\begin{prop}\label{nor1}Keep the notation as above. Assume that $b=8$. 
Then 
$$
B_{a,8}\,\stackrel{k}{\sim}\,C^+_a\times C^-_a\,
\stackrel{k(\sqrt{-1})}{\simeq}\,C^2_a~.
$$
\end{prop}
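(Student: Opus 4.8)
The plan is to combine the general Prym machinery already set up (Propositions \ref{geoB} and \ref{pan}) with the explicit $\iota_8^\pm$-quotient description at $b=8$. First I would recall from Proposition \ref{geoB} that $B_{a,8}\sim{\rm Prym}(\cD_{a,8}/D_{a,8})$, so it suffices to analyze the double cover $\cD_{a,8}\to D_{a,8}$ when the quartic extension $k(\cD_{a,b})/k(T_1)$ degenerates (which happens exactly at $b=8$, by Proposition \ref{nor}). At $b=8$ the curve $\cD_{a,8}$ is already a Galois $(\Z/2\Z)^2$-cover of $\PP^1_{T_1}$: the Galois group is generated by the covering involution $\tau:(T_1,w)\mapsto(T_1,-w)$ of $\cD_{a,8}\to D_{a,8}$ together with one of the new involutions $\iota_8^+$ (note $\iota_8^+\iota_8^-=\tau$ up to the obvious identification, and $\iota_8^+\tau=\iota_8^-$), so the three intermediate double covers of $\PP^1$ are precisely $D_{a,8}=\cD_{a,8}/\tau$, $C_a^+=\cD_{a,8}/\iota_8^+$ and $C_a^-=\cD_{a,8}/\iota_8^-$.

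Next I would invoke the standard decomposition of the Jacobian of a bielliptic-type $(\Z/2\Z)^2$-cover: for a curve $\cD$ with an action of $V=(\Z/2\Z)^2$ and three intermediate quotients $\cD/\tau$, $\cD/\iota_8^+$, $\cD/\iota_8^-$, one has an isogeny
$$
{\rm Jac}(\cD_{a,8})\,\sim\,{\rm Jac}(D_{a,8})\times{\rm Jac}(C_a^+)\times{\rm Jac}(C_a^-)
$$
(this is the elementary representation-theoretic splitting of $H^1(\cD_{a,8})$ under $V$, with $\PP^1$ contributing nothing). Since ${\rm Prym}(\cD_{a,8}/D_{a,8})$ is by definition the complement of ${\rm Jac}(D_{a,8})$ in ${\rm Jac}(\cD_{a,8})$, comparing dimensions (${\rm Jac}(\cD_{a,8})$ has dimension $4$, ${\rm Jac}(D_{a,8})$ dimension $2$, each $C_a^\pm$ dimension $1$) gives
$$
B_{a,8}\,\sim\,{\rm Prym}(\cD_{a,8}/D_{a,8})\,\sim\,{\rm Jac}(C_a^+)\times{\rm Jac}(C_a^-)\,=\,C_a^+\times C_a^-,
$$
and this isogeny is defined over $k$ because all the involutions $\tau,\iota_8^\pm$ and the equations for $C_a^\pm$ are defined over $k$ (here we use that $k$ contains $\zeta$, hence $\sqrt5=1+2\zeta+2\zeta^4$, so the coefficients $10(-5\pm\sqrt5)$, etc., lie in $k$). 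Finally, the already-recorded change of variables $(x,y)=\big(\frac{5\pm\sqrt5}{10}X,\frac{\sqrt{5\pm2\sqrt5}}{5}Y\big)$ exhibits an isomorphism $C_a^\pm\cong C_a$ over $k(\sqrt{5\pm2\sqrt5})=k(\sqrt{-1})$ — the displayed identities $\sqrt{5+2\sqrt5}=\sqrt{-1}(1+2\zeta^3+2\zeta^4)$ and $\sqrt{5-2\sqrt5}=-\sqrt{-1}(1+2\zeta+2\zeta^3)$ show both square roots are $\sqrt{-1}$ times a unit in $k$ — so $B_{a,8}\stackrel{k(\sqrt{-1})}{\sim}C_a^2$, and since both sides here are honestly isomorphic (a product of two isomorphic elliptic curves) one gets $\simeq$ rather than merely $\sim$.

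The main obstacle I anticipate is making the $(\Z/2\Z)^2$-splitting argument precise enough to conclude that ${\rm Prym}(\cD_{a,8}/D_{a,8})$ is \emph{exactly} $C_a^+\times C_a^-$ (up to isogeny over $k$), rather than just an abelian surface receiving maps from them: one must check that the images of ${\rm Jac}(C_a^+)$ and ${\rm Jac}(C_a^-)$ in ${\rm Jac}(\cD_{a,8})$ together span the Prym and have finite intersection, which amounts to verifying that $\iota_8^+$ and $\iota_8^-$ are genuinely distinct involutions with $\tau=\iota_8^+\iota_8^-$ and that neither $C_a^\pm$ is contained in ${\rm Jac}(D_{a,8})$ — equivalently that $C_a^\pm$ has positive genus, which is immediate from the smoothness of $C_a$ guaranteed by $D(a,8)=a(a+32)^2(27a+64)\neq0$. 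One should also confirm that $b=8$ indeed forces the quartic map to be Galois (so that $\iota_8^\pm$ exist), which follows by inspecting the polynomial $pX^4+qX^2+r$ of Proposition \ref{nor} at $b=8$, where $s=(T_1^2+T_1-1)^2/(5w^2)$ becomes a square in $k(\cD_{a,8})$.
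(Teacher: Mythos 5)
Your proposal is correct and follows essentially the same route as the paper: the paper's proof also rests on the three involutions $\tau,\iota_8^{\pm}$ on $\cD_{a,8}$ and deduces ${\rm Prym}(\cD_{a,8}/D_{a,8})\sim C_a^{+}\times C_a^{-}$ from the push--pull identities $(\pi^{\pm}_a)_*\circ\pi^*=0$ and $(\pi^{-}_a)_*\circ(\pi^{+}_a)^*=0$, which is exactly the content of the $(\Z/2\Z)^2$-cover decomposition and dimension count you invoke, and it likewise reuses the previously displayed $k(\sqrt{-1})$-isomorphisms $C_a^{\pm}\simeq C_a$. Your write-up just spells out (correctly) the Galois structure, the rationality over $k$, and the dimension bookkeeping that the paper leaves implicit.
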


\begin{proof}
We denote by $\pi:\mathcal{D}_{a,8}\lra D_{a,8}=\mathcal{D}_{a,8}/\tau$ the quotient map and 
$\pi^{\pm}_a:\mathcal{D}_{a,8}\lra C^{\pm}_a$ as well. Then it is easy to see that $(\pi^{\pm}_a)_\ast\circ \pi^\ast=0$ and 
$(\pi^{-}_a)_\ast\circ (\pi^{+}_a)^\ast=0$ on Jacobians respectively.  The claim follows from this.  
\end{proof}

Summing up, 
we have proved the following: 
\begin{prop}Assume that $X_{a,b}$ is smooth. 
If the map $\mathcal{D}_{a,b}\rightarrow \PP^1$ is not Galois (hence $b\not=8$), then 
$$
J(X_{a,b})\,\stackrel{k}{\sim}\,E_{a,b}\times {\rm Jac}(C_{a,b})^2.
$$
In case $b=8$, this map is Galois, and moreover 
$$
J(X_{a,8})\,\stackrel{k}{\sim}\,
E_{a,8}\times (C^+_a)^2\times (C^-_a)^2\,
\stackrel{k(\sqrt{-1})}{\simeq}\,
E_{a,8}\times C_a^4~.
$$
\end{prop}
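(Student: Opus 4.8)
The plan is to assemble the desired decomposition from the structural results already established; the only real content is the identification of the abelian surface $B_{a,b}$ occurring in $J(X_{a,b})$, and that is essentially done in the earlier propositions. The starting point is the $D_5$-decomposition of Proposition~\ref{finedecij}, in the form over $k$ given by Proposition~\ref{general},
$$
J(X_{a,b})\,\stackrel{k}{\sim}\,E_{a,b}\times B_{a,b}^2,
$$
with $\QQ(\sqrt{5})$ acting on $B_{a,b}$, and with the elliptic factor $E_{a,b}$ being $k$-isogenous, by Proposition~\ref{geoE}, to the explicit elliptic curve $E_{a,b}''$. Since $k$-isogeny is compatible with products, it suffices to pin down $B_{a,b}$ up to $k$-isogeny in the two cases of the statement and then substitute.

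First I would treat the case where $\mathcal{D}_{a,b}\to\PP^1$ is not Galois, which as noted in the text forces $b\neq8$. By Proposition~\ref{geoB}, $B_{a,b}\sim{\rm Prym}(\mathcal{D}_{a,b}/D_{a,b})$ over $k$, and by Proposition~\ref{pan} this Prym variety is isomorphic, as a principally polarized abelian variety, to ${\rm Jac}(C_{a,b})$ for the genus two curve $C_{a,b}$ of equation~(\ref{genus2model}). Hence $B_{a,b}\stackrel{k}{\sim}{\rm Jac}(C_{a,b})$, and substituting into the displayed decomposition gives $J(X_{a,b})\stackrel{k}{\sim}E_{a,b}\times{\rm Jac}(C_{a,b})^2$.

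For the case $b=8$ I would first record that $\mathcal{D}_{a,8}\to\PP^1$ is Galois: the function $(T_1-2)/(5T_1-b-2)=(T_1-2)/\bigl(5(T_1-2)\bigr)=1/5$ appearing in Proposition~\ref{nor} becomes constant, and since $k$ contains a primitive fifth root of unity one has $\sqrt{5}\in\QQ(\zeta)\subset k$, so the field extension of Proposition~\ref{nor} is trivial. Proposition~\ref{nor1} then gives $B_{a,8}\stackrel{k}{\sim}C^+_a\times C^-_a$ and $B_{a,8}\simeq C_a^2$ over $k(\sqrt{-1})$; substituting into the $k$-decomposition yields
$$
J(X_{a,8})\,\stackrel{k}{\sim}\,E_{a,8}\times\bigl(C^+_a\times C^-_a\bigr)^2\,=\,E_{a,8}\times(C^+_a)^2\times(C^-_a)^2,
$$
which over $k(\sqrt{-1})$ becomes $E_{a,8}\times C_a^4$.

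The argument is essentially bookkeeping layered on Propositions~\ref{pan} and~\ref{nor1} (which themselves rest on Pantazis' bigonal construction and an explicit quotient computation), so I do not expect a genuine obstacle. The two points that warrant care are: first, that every isogeny used is defined over $k$ — this holds because Proposition~\ref{general} gives the decomposition over $k$ and the curves $\mathcal{D}_{a,b},D_{a,b},C_{a,b},C^{\pm}_a$ together with the relevant covering maps are defined over $k$; and second, that smoothness of $X_{a,b}$ forces $C_{a,b}$ (resp.\ $C^{\pm}_a$) to be a genuine curve of the stated genus, which follows from the discriminant $2^{6}5^{5}(b-8)^{22}\Delta(a,b)^2$ of $C_{a,b}$ (resp.\ from $D(a,8)=a(a+32)^2(27a+64)$ and the discriminant of $C^{\pm}_a$) together with $D(a,b)=a\Delta(a,b)\neq0$.
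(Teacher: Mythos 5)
Your proposal is correct and follows essentially the same route as the paper, which states this proposition as a "summing up" of Propositions \ref{general}, \ref{geoE}, \ref{geoB}, \ref{pan} and \ref{nor1}: substitute the identification of $B_{a,b}$ (via the Prym of $\cD_{a,b}/D_{a,b}$, then ${\rm Jac}(C_{a,b})$ or $C^+_a\times C^-_a$ according to whether $\cD_{a,b}\to\PP^1$ is Galois) into $J(X_{a,b})\stackrel{k}{\sim}E_{a,b}\times B_{a,b}^2$. Your added checks (that $b=8$ makes the extension of Proposition \ref{nor} trivial since $\sqrt{5}\in k$, and the discriminant verifications) are consistent with the paper's remarks and introduce no gap.
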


\subsection{Moduli}
The moduli space of principally polarized abelian surfaces with 
real multiplication by $\ZZ[(1+\sqrt{5})/2]$ was studied in \cite{wilson}.
A general such abelian surface is the Jacobian of a genus two curve and this curve
is determined by six points on $\PP^1$. In \cite[Section 5]{wilson} one finds that these six points satisfy the equation (for $m=6$)
$$
H_m(z)\,:=\,12z_m^4\,-\,4\tau_2(z)z_m^2\,+\,\tau_2^2(z)\,-\,4\tau_4(z)\,=\,0 
$$
where $\tau_k(z)=\sum z_j^k$ and $z_1,\ldots,z_j$ are certain (explicit) functions in the coordinates of the six points. 
These $z_j$ satisfy $\sum z_j=0$ and $\sum z_j^3=0$, so $z=(z_1:\ldots:z_6)$ is a point of the Segre cubic threefold, and permuting the points on $\PP^1$ 
permutes the $z_j$.
Moreover,  \cite[p.133]{wilson} gives explicit 
expressions of the Igusa invariants $I_2(z),I_4(z),I_6(z),I_{10}(z)$ 
of the genus two curve in terms of $z$. 

We computed the Igusa invariants $I_k(C_{a,b})$ of the curve $C_{a,b}$ with Magma \cite{magma}. We checked that there is a (weighted) homogeneous polynomial 
$P$ of degree $24$ in the Igusa invariants, with $16$ terms,
$$
P\,:=\,I_2^4 I_4^4\, -\, 12 I_2^3 I_4^3 I_6\,-\,972 I_2^3 I_4^2 I_{10}\,-\,\ldots
\,-\,15116544 I_4 I_{10}^2 + 81 I_6^4~,
$$
such that $P(C_{a,b})=0$ for all $a,b$. 
Next we parametrized the Segre threefold by taking six general points, $(1:x_1)$, $(1:x_2)$, $(1:x_3)$, $(1:1)$, $(1:0)$, $(0:1)$, in
$\PP^1$, and we computed the functions $z_1,\ldots,z_6$ as well as the Igusa invariants $I_k(z)$ explicitly. 
Substituting the $I_k(z)$ in $P$ and factorizing the result, we found that
$$
P(z)\,=\,2^{-28}3^6\,H_1(z)H_2(z)H_3(z)H_4(z)H_5(z)H_6(z)~.
$$ 
Thus indeed
$\QQ(\sqrt{5})\subset {\rm End}_{\overline{k}}(J(C_{a,b}))\otimes_\Z\Q$,
as we already know from our construction. 
The more precise result that $\ZZ [\frac{1+\sqrt{5}}{2}]\subset {\rm End}_{\overline{k}}(J(C_{a,b}))$
follows as well.

{\renewcommand{\arraystretch}{1.3}
$$
\begin{array}{rcl}

\end{array}
$$
}

\end{document}